\newtheorem{thm}{Theorem}[section]
\newtheorem{lem}[thm]{Lemma}
\newtheorem{prop}[thm]{Proposition}
\newtheorem{defn}[thm]{Definition}
\numberwithin{equation}{section}
\newcommand{\cj}{\overline}
\newcommand{\al}{\alpha}
\newcommand{\ga}{\gamma}
\newcommand{\Ga}{\Gamma}
\newcommand{\hga}{{\hat{\gamma}}}
\newcommand{\tga}{{\widetilde{\gamma}}}
\newcommand{\ep}{\varepsilon}
\newcommand{\qq}{{q_{\circ}}}
\newcommand{\rr}{{r_{\circ}}}
\begin{document}
\title[Profile decomposition of fractional Schr\"odinger equations]{Profile decompositions \\ of fractional Schr\"odinger equations \\ with angularly regular data}

\author[Y. Cho]{Yonggeun Cho}
\author[G. Hwang]{Gyeongha Hwang}
\author[S. Kwon]{Soonsik Kwon}
\author[S. Lee]{Sanghyuk Lee}

\address{Yonggeun Cho, Department of Mathematics, and Institute of Pure and Applied Mathematics, Chonbuk National University, Jeonju 561-756, Republic of Korea}
\email{changocho@jbnu.ac.kr}
\address{Gyeongha Hwang $\&$ Sanghyuk Lee, Department of Mathematical Sciences, Seoul National University, Seoul 151-747, Republic of Korea}
\email{ghhwang@snu.ac.kr, shklee@snu.ac.kr}
\address{Soonsik Kwon, Department of Mathematical Sciences, Korea Advanced Institute of Science and Technology, Daejeon 305-701, Republic of Korea}
\email{soonsikk@kaist.edu}

\subjclass[2010]{35Q55, 35Q40}
\keywords{fractional Schr\"odinger equation, mass critical nonlinearity, profile decomposition, angularly regular data}
\thanks{}

\begin{abstract}
We study the fractional Schr\"odinger equations in $\mathbb R^{1+d},
d \geq 3$ of order ${d}/({d-1}) < \al < 2$. Under the angular
regularity assumption we prove linear and nonlinear profile
decompositions which extend the previous results \cite{chkl2} to
data without radial assumption. As applications we show blowup
phenomena of solutions to mass-critical fractional Hartree
equations.
\end{abstract}

\maketitle
\section{Introduction}
We continue the study of the fractional Schr\"odinger equations with
Hartree type nonlinearity which was carried out in our previous work
\cite{chkl2} under the assumption that the initial data are radial.
Let us consider the following equation with initial data of angular
regularity:
\begin{align}\label{eqn}
\left\{\begin{array}{l} iu_t + (-\Delta)^\frac\alpha2 u = \lambda (|x|^{-\al} * |u|^2)u, \;\;(t, x) \in \mathbb{R}^{1+d},\,\, d \ge 3,\\
u(0, x) = f(x) \in L^2_\rho H^\ga_\sigma,
\end{array}\right.
\end{align}
where $\lambda=\pm1$, $(-\Delta)^\frac\alpha2=\mathcal
F^{-1}|\xi|^\alpha \mathcal F$. Here, the Sobolev space $ L^2_\rho
H^\ga_\sigma$ is defined by the norm \[\|f\|_{L_\rho^2
H_\sigma^\ga}^2 = \int_0^\infty \int_{S^{n-1}}|D_\sigma^\ga
f(\rho\sigma)|^2\,d\sigma \rho^{n-1}\,d\rho\] and the operator
$D_\sigma^\ga$ is given by $(1 - \Delta_\sigma)^\frac{\ga}{2}$ while
$\Delta_\sigma$ is the Laplace-Beltrami operator defined on the unit
sphere. In dimension 3, $\Delta_\sigma$ is the square of angular momentum operator. So, the norm $\|f\|_{L^2_\rho H_\sigma^\ga}$ can be referred as a quantity associated with mass and initial angular momentum. The index $\alpha \in (0, 2)$ is  the fractional order of
equation which is known for L\'{e}vy stability index. In \cite{la1}
Laskin introduced the fractional quantum mechanics in which he
generalized the Brownian-like quantum mechanical path, in the
Feynman path integral approach to quantum mechanics, to the
$\alpha$-stable L\'evy-like quantum mechanical path. The equation
\eqref{eqn} of other types of nonlinearities also appears in
astrophysics or water waves, particularly  with $\alpha = \frac12$
or $\frac32$. See \cite{frohlenz2, iopu} and references therein.

The solutions to equation \eqref{eqn} have the conservation laws for
the mass and the energy:
\begin{align*}
M(u) &= \int |u|^2 \,dx,\qquad E(u) = \frac 12 \int
\cj{u}|\nabla|^\alpha u \,dx - \frac\lambda4 \int \cj{u}(|x|^{-\al} * |u|^2)u\,dx.
\end{align*}
We say  that \eqref{eqn} is focusing if $\lambda =1$, and defocusing
if $\lambda=-1$. The equation \eqref{eqn} is mass-critical, as
$M(u)$ is invariant under scaling $  u (t,x) \to u_\rho (t,x)=
\rho^{-d/2}u({t}/{\rho^{\alpha}},$ ${x}/\rho )$, $\rho>0$, and
$u_\rho$ is again a solution to \eqref{eqn} with initial datum
$\rho^{-d/2}u(0, {x}/\rho)$.  The Cauchy problem \eqref{eqn} is
locally well-posed in $L^2_\rho H^\ga_\sigma$ if $\gamma \ge
\gamma_0$ for a certain $\gamma_0$. See Appendix A. There are
well-posedness results with initial data in different Sobolev
spaces. See \cite{chho, iopu} for results with critical or noncritical
nonlinearity.

In the previous works \cite{ chkl,chkl2} it was intended to extend
the theory of critical nonlinear Schr\"odinger equations to the
fractional order equations. For the focusing case, the authors
\cite{chkl} used a virial argument to show the finite time blowup
with radial data provided that the energy $E(u)$ is negative.  In
\cite{chkl2} the linear profile decomposition for the radial $L^2$
data was established. In this paper, we  extend the profile
decomposition to general data while assuming an extra angular
regularity and apply it to show blowup phenomena of solutions to
mass-critical fractional Hartree equations.


Related to nonlinear  dispersive equations with the critical
nonlinearity, the profile decompositions have been intensively
studied and led to various recent developments. For example, see
\cite{keme}. Profile decompositions for the Schr\"odinger equations
with $L^2$ data were obtained by Merle and Vega \cite{meve} when
$d=2$, Carles and Keraani \cite{ck}, $d=1$, and B\'egout and Vargas \cite{beva}, $d\ge 3$. (Also see \cite{bage, bul, ram} for results on the
wave equation and \cite{sh, ksv} on general dispersive
equations.) These results are based on refinements of Strichartz
estimates (see \cite{mvv, bo1}). There is a different approach which
makes use of the Sobolev imbedding \cite{favi} but such approach is not applicable
especially when the equation is $L^2$-critical.

Our approach here also relies on  a refinement of Strichartz
estimate which strengthens the usual estimate. However, when $\alpha
<2$, due to insufficient dispersion, we do not have a proper linear
estimate for general data which matches with the natural scaling
$u\to u_\rho$. In order to get around this one may consider the
Strichartz estimates which  are accompanied  by a loss of derivative
or integrability. For instance, when one assumes the data is radial
or more regular in angular direction, Strichartz estimates have
wider admissible range. (See \cite{guwa} and \cite{cholee}.) More
precisely we make use of the estimate \eqref{basic}. Thanks to the
extended admissible range of \eqref{basic} it is relatively simpler
to obtain the refinement (see Proposition \ref{ref-str-prop} which
is used for the proof of profile decomposition) but the estimate
\eqref{basic} suffers from large loss of angular regularity which
makes it difficult to use \eqref{basic} directly. Hence we need
smoothing in angular variables to compensate the loss. This is done
in Lemma \ref{44-str} by making use of a bilinear $L^2$ estimate.

 We now denote by $U
(t)f$ the solution of the linear equation $iu_t +
(-\Delta)^\frac\alpha2 u = 0$ with initial datum $f$. Then it is
formally given  by
\begin{align*}
U (t)f = e^{it(-\Delta)^{\frac\alpha2}}f:=\frac1{(2\pi)^d}
\int_{\mathbb{R}^d} e^{i(x\cdot \xi + t
|\xi|^\alpha)}\widehat{f}(\xi)\,d\xi.
\end{align*}
Here $\widehat{f}$ denotes the Fourier transform of $f$ such that
$\widehat{f}(\xi) = \int_{\mathbb{R}^d} e^{-ix\cdot \xi} f(x)\,dx$.

The following is our main result.
\begin{thm}\label{main}
Let  $d \ge 3$, $\frac{d}{d-1} < \alpha < 2$, and $2 < q, r <
\infty$ satisfy $\frac \al q + \frac dr = \frac d2$.  Suppose that
$(u_n)_{n \geq 1}$ is a sequence of complex-valued functions
satisfying $\|u_n\|_{L^2_\rho H^\ga_\sigma} \leq 1$ for some $\ga \ge 0$. Then up to a subsequence, for
any $l \geq 1$, there exist a sequence of functions
$(\phi^j)_{1\leq j \leq l} \subset L^2_\rho H^\ga_\sigma$, $\omega_n^l \in L^2_\rho H^\ga_\sigma$ and a
family of parameters $(h_n^j, t_n^j)_{1 \leq j \leq l, n \geq 1}$
such that
$$u_n(x) = \sum_{1 \leq j \leq l} U(t^j_n)[(h^j_n)^{-d/2}\phi^j(\cdot/{h^j_n})](x) + \omega^l_n(x)$$
and the following properties are satisfied:\\
$(1)$ If   $\hga < \frac {d-1}2 - \frac 1q - \frac {d-1}r$,
then
\begin{align}\label{rem}\lim_{l \rightarrow \infty} \limsup_{n \rightarrow \infty}
\|U_\alpha(\cdot) \omega^l_n\|_{L^q_tL^r_\rho H^{\hga + \ga}_\sigma
(\mathbb{R} \times \mathbb{R}^d)} = 0.\end{align}
$(2)$  For $j \neq k$, $(h^j_n, t^j_n)_{n \geq 1}$ and $(h^k_n,
t^k_n)_{n \geq 1}$ are asymptotically orthogonal in the sense that
\begin{align*}
 \limsup_{n \rightarrow
\infty}\left(\frac{h^j_n}{h^k_n} + \frac{h^k_n}{h^j_n} +
\frac{|t^j_n - t^k_n|}{(h^j_n)^\alpha} + \frac{|t^j_n -
t^k_n|}{(h^k_n)^\alpha} \right)= \infty.
\end{align*}
$(3)$ For each $l\ge 0$,
$$\lim_{n \rightarrow \infty} \Big[\|u_n\|_{L^2_\rho H_\sigma^\ga}^2 - (\sum_{1 \leq j \leq l}\|\phi^j\|_{L^2_\rho H_\sigma^\ga}^2
+ \|\omega^l_n\|_{L^2_\rho H_\sigma^\ga}^2) \Big] = 0.$$
\end{thm}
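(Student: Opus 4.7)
My plan is to follow the by-now standard iterative extraction strategy going back to Bahouri--G\'erard and Keraani, with the quantitative engine supplied by the refined Strichartz estimate of Proposition \ref{ref-str-prop} (which itself leverages the angular smoothing of Lemma \ref{44-str}). Heuristically the refined estimate should take the form
\[
\|U(t) f\|_{L^q_t L^r_\rho H^{\hga+\ga}_\sigma} \lesssim \|f\|_{L^2_\rho H^\ga_\sigma}^{1-\theta}\,\Bigl(\sup_{Q} \|P_Q f\|_{L^2_\rho H^\ga_\sigma}\Bigr)^{\theta}
\]
for some $\theta\in(0,1)$ and a suitable dyadic family $\{Q\}$ of frequency pieces adapted to the $\al$-fractional scaling; the admissibility constraint $\hga<\tfrac{d-1}{2}-\tfrac1q-\tfrac{d-1}r$ reflects the angular regularity range of the underlying estimate \eqref{basic}. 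This inequality supplies the usual dichotomy: either the Strichartz norm on the left is already small, or a nontrivial proportion of the $L^2_\rho H^\ga_\sigma$ mass concentrates in some single frequency piece.

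For the first extraction, if $\limsup_n\|U(t)u_n\|_{L^q L^r H^{\hga+\ga}_\sigma}>0$, the refined inequality produces frequency cubes $Q_n$ of sidelength comparable to $(h^1_n)^{-1}$ and $\delta>0$ with $\|P_{Q_n}u_n\|_{L^2_\rho H^\ga_\sigma}\ge\delta$. Rescaling $v_n(x):=(h^1_n)^{d/2}u_n(h^1_n x)$ keeps the sequence bounded in $L^2_\rho H^\ga_\sigma$ (the norm is dilation invariant, and no spatial translation parameter is needed precisely because the norm is not translation invariant and so already forbids bounded families drifting off to infinity) and essentially frequency-localizes it to a fixed annulus. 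Choosing $t^1_n$ so that $U(-t^1_n)v_n$ has a weak limit capturing the maximum possible $L^2_\rho H^\ga_\sigma$ mass, one passes to a subsequence, defines $\phi^1$ as this weak limit, and sets $\omega^1_n:=u_n-U(t^1_n)[(h^1_n)^{-d/2}\phi^1(\cdot/h^1_n)]$. Iterating the procedure on $\omega^1_n$ yields the further profiles $\phi^j$ and parameters $(h^j_n,t^j_n)$ for $j\ge 2$.

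The asymptotic orthogonality (2) is proved by contradiction: if some pair of parameter sequences has bounded ratios (after extracting a subsequence, $h^j_n/h^k_n\to H\in(0,\infty)$ and $(t^j_n-t^k_n)/(h^j_n)^\al\to s\in\RR$), then composing the corresponding dilations and time shifts into a single bounded operator on $L^2_\rho H^\ga_\sigma$ would force $\phi^k$ to be extractable already at the $j$-th step, contradicting the maximality of $\phi^j$ or the corresponding decrement of the remainder's Strichartz norm. Combining this with $L^2$-unitarity of $U(t)$ on $L^2_\rho H^\ga_\sigma$ (which holds because $(-\Delta)^{\al/2}$ commutes with the spherical Laplacian) and with the dilation invariance of the norm, one obtains the Pythagorean identity (3). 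In particular $\|\phi^j\|_{L^2_\rho H^\ga_\sigma}\to 0$ as $j\to\infty$. Finally, applying the refined Strichartz to $\omega^l_n$ and observing that the sup-of-pieces mass of $\omega^l_n$ is controlled by a constant multiple of $\|\phi^{l+1}\|_{L^2_\rho H^\ga_\sigma}$ yields \eqref{rem}.

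The main obstacle is making every step of the extraction compatible with the angularly regular setting. Because the $L^2_\rho H^\ga_\sigma$ norm is not translation invariant, spatial translation parameters are absent and concentration must be realized purely through the scale $h^j_n$; this forces careful bookkeeping of where the angular smoothing of Lemma \ref{44-str} is used to offset the large loss of angular regularity built into \eqref{basic}. Equally delicate is the case $|t^j_n|/(h^j_n)^\al\to\infty$, where the candidate weak limit of $U(-t^j_n)v_n$ disperses and vanishes: one must show that the corresponding portion of $u_n$ is already small in the Strichartz norm, so that it need not be extracted as a separate bubble and the iteration can continue unobstructed.
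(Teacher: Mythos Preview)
Your one-step iterative extraction differs structurally from the paper's argument, which proceeds in two genuine stages. First (Proposition~\ref{prop-decom1}) one uses the refined Strichartz estimate of Proposition~\ref{ref-str-prop} together with a level-set cutoff on the Fourier side to peel off finitely many frequency-scale pieces $f_n^j$ whose rescaled Fourier transforms satisfy a uniform \emph{pointwise} bound $(\rho_n^j)^{d/2}|\widehat{D_\sigma^\gamma f_n^j}(\rho_n^j\xi)|\le C_\delta\chi_K(\xi)$ on a fixed compact annulus $K$, leaving a remainder $q_n^N$ already small in the Strichartz norm. Then (Proposition~\ref{further-decomp}) for each such $L^\infty$-bounded piece one extracts time-translated profiles by maximizing over weak limits. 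The $L^\infty$ Fourier bound is not incidental: it is precisely what makes the key inequality \eqref{bound-mu} go through, via a pointwise gradient estimate that controls $\|U(t)F_n^M\|_{L^\infty_tL^\infty_\rho H^\gamma_\sigma}$ and hence links the Strichartz norm of the residual to the quantity $\mu(\mathcal F^M)=\sup\{\|\text{w-lim }U(-s_n)F_n^M\|_{L^2_\rho H^\gamma_\sigma}\}$.

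Your proposal lacks this mechanism. The assertion you treat as routine---that ``the sup-of-pieces mass of $\omega^l_n$ is controlled by a constant multiple of $\|\phi^{l+1}\|_{L^2_\rho H^\gamma_\sigma}$''---is exactly the implication that small weak limits force small Strichartz norm, and it does not follow from mere frequency-annulus localization. Knowing that $2^{k_nd(1/2-1/p)}\|\widehat{P_{k_n}\omega_n^l}\|_p$ is bounded below (which is all the refined estimate actually delivers, with $p<2$) does not by itself guarantee a sequence $s_n$ for which $U(-s_n)$ applied to the rescaled remainder has a weak limit of comparable size; one must first truncate to an $L^\infty$-bounded Fourier piece via the level-set argument, and then run the $L^\infty_{t,\rho}H^\gamma_\sigma$ argument of Proposition~\ref{further-decomp} to tie $\mu$ to the Strichartz norm. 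Your closing paragraph correctly identifies this as the delicate step but supplies no argument; without it, \eqref{rem} remains unproved. (Two minor points: the refinement in Proposition~\ref{ref-str-prop} involves $\|\widehat{P_k f}\|_p$ with $p<2$ rather than an $L^2$ piece norm as you write, and the frequency pieces here are dyadic annuli, not cubes---there is no frequency-translation parameter in this angularly regular setting.)
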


It should be noted that $\frac {d-1}2 - \frac 1q - \frac {d-1}r$ is positive. So we can take a positive $\hga$ for \eqref{rem}, which is important for proof of nonlinear profile decomposition. The regularity requirement  $\hga < \frac {d-1}2 - \frac 1q - \frac {d-1}r$ is far from being optimal.
Concerning the parameters which appear in Theorem \ref{main}, one notices that the space
translation and modulation(=frequency translation) are absent. It is
not surprising in that they are not noncompact symmetries of the
linear estimate \eqref{basic}. For instance, testing with a
translating sequence $ f_n(\cdot)=f(\cdot - nx_0)$ for $ x_0 \neq 0 $, one can observe
that $\| f_n \|_{L^2} = c $ but $ \| D^{\hat{\gamma}}_\sigma
U(\cdot)f_n \|_{L^q_tL^r_\rho L_\sigma^2} \to 0$ \footnote{In view
of Sobolev embedding, the derivative $D^{\hat{\gamma}}_\sigma$ is not
sufficient to recover the loss of integrability in $L^2_\sigma$,
when compared the usual Strichartz estimate.}.

\smallskip


Once we obtain the linear profile decomposition, we can apply it to
the nonlinear problem \eqref{eqn}. The procedure is now well
established and rather standard. Especially, the equation
\eqref{eqn} in the angularly regular case is similar to the
radial case \cite{chkl2} (For nonlinear Schr\"odinger equation, see \cite{ker2}). Hence, we mostly omit its proof. But for
the sake of  completeness we provide statements of results regarding
the blowup problem (see Section 4).

Using a perturbation argument and global well-posedness for small
data (see Appendix \ref{s-gwp}), we can extend the linear profile decomposition
to the nonlinear profile decomposition. As in the linear profile
decomposition, parameters in the nonlinear profile decomposition
also have the asymptotic orthogonality. Then it is easy to show the
existence of blowup solutions of minimal quantity associated with angular regularity, and the mass
concentration phenomena of finite time blowup solutions. See Section
4 for detail.

\smallskip

The rest of the paper is organized as follows: In Section 2, we will
show the refined Strichartz estimate. Section 3 will be devoted to proving the main theorem, establishing the linear profile
decomposition. In Section 4, we discuss applications to nonlinear
profile decomposition and blowup profile. In Appendix A, we provide
the proof of small data global well-posedness for the Cauchy problem
\eqref{eqn}, as the initial data is not usual Sobolev data. The
proofs of theorems in Section 4 rely on Propositions \ref{CP1},
\ref{wpasy}.

\section{Refined Strichartz Estimates}\label{sec2}
For the fractional Schr\"odinger equation with $\al <2$, it is known that the Strichartz estimate for $L^2$-data has a loss of regularity. However, if one imposes an angular regularity on data, one can recover some of loss of regularity.
Recently, almost optimal range of admissible pairs was established
in \cite{guwa} and the range was further extended in \cite{cholee,
ke} to include the remaining endpoint cases.

For $\frac{d}{d-1}<\alpha < 2$ and $2\le q,r\le \infty$,  let us
set
\[\beta(\al,q,r) = d/2 - d/r - \al /q.\] We now recall from \cite{chl} the estimate
\[\||\nabla|^{-\beta(\al,q,r)}D_\sigma^{\frac {d-2}{2} - \frac
{d-1}{r}}U(\cdot)f \|_{L^2_tL^r_\rho L^2_\sigma} \lesssim
\|f\|_{L^2}\] for $\frac {2(d-2)}{d-2} < r < \infty$. Then by
interpolating with mass conservation (the case $q=\infty$, $r=2$) we
have
\begin{equation}\label{basic}
\||\nabla|^{-\beta(\al,q,r)} D_\sigma^{\hga}U(\cdot)f
\|_{L^q_tL^r_\rho L^2_\sigma} \lesssim \|f\|_{L^2} \end{equation}
holds for $\hga < \frac {d-1}2 - \frac 1q - \frac {d-1}r$ whenever
$q,r\ge 2,\  r \neq \infty$ and $\frac{d}{2}(\frac12 - \frac1r) \le \frac 1q <
(d-1)(\frac 12 - \frac 1r)$.  Using frequency decomposition, we
rewrite \eqref{basic}. Let $P_k, k \in \mathbb Z,$ denote the
Littlewood-Paley operator with symbol $\chi(\xi/2^k) \in C_0^\infty$ which is radial and
supported in the annulus $A_k = \{2^{k-1} < |\xi| \le 2^{k+1}\}$
such that $\sum_{k\in \mathbb Z} P_k=id$.

\begin{lem}\label{str-radial}
Let $\frac{d}{d-1} < \al < 2$, $q,r \geq 2,$ and $r \neq \infty$,
and let $\beta(\al,q,r) = d/2 - d/r - \al /q$.
 If $\frac{d}{2}(\frac12 - \frac1r) \le \frac 1q < (d-1)(\frac 12 - \frac 1r)$,
 then for $\hga < \frac {d-1}2 - \frac 1q - \frac {d-1}r$
$$\|D_\sigma^{\hga}U(\cdot) f \|_{L^q_tL^r_\rho L^2_\sigma} \lesssim \Big(\sum_{k \in \mathbb Z} 2^{2k\beta(\al, q, r)}\|P_kf\|^2_{L^2}\Big)^{1/2}.
$$\end{lem}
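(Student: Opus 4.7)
The plan is to combine the fixed-frequency Strichartz estimate \eqref{basic} with a standard dyadic square-function argument. First I would decompose $f=\sum_{k\in\ZZ}P_k f$ and bound each dyadic block. Since $P_k f$ is Fourier-supported in the annulus $A_k$, the operator $|\nabla|^\beta$ with $\beta=\beta(\al,q,r)$ acts on that support essentially as multiplication by $2^{k\beta}$, so $\||\nabla|^\beta P_k f\|_{L^2}\sim 2^{k\beta}\|P_k f\|_{L^2}$. Applying \eqref{basic} with $f$ replaced by $|\nabla|^\beta P_k f$, and using the commutativity of $|\nabla|^\beta$ with $D_\sigma^{\hga}$ and $U(t)$ so that the outer $|\nabla|^{-\beta}$ cancels, I obtain the single-block estimate
\begin{align*}
\|D_\sigma^{\hga}U(\cdot)P_k f\|_{L^q_tL^r_\rho L^2_\sigma}\lesssim 2^{k\beta(\al,q,r)}\|P_k f\|_{L^2}.
\end{align*}

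Next I would reassemble these estimates. Each piece $g_k:=D_\sigma^{\hga}U(\cdot)P_k f$ is, at every time $t$, Fourier-supported in $A_k$, because $U(t)$, $D_\sigma^{\hga}$ and $P_k$ all commute. The Littlewood--Paley square-function equivalence in the mixed-norm space $L^r_\rho L^2_\sigma$ (valid for $1<r<\infty$ since $L^2_\sigma$ is Hilbert, hence UMD) yields
\begin{align*}
\Big\|\sum_k g_k\Big\|_{L^r_\rho L^2_\sigma}\lesssim \Big\|\Big(\sum_k|g_k|^2\Big)^{1/2}\Big\|_{L^r_\rho L^2_\sigma}.
\end{align*}
Taking the $L^q_t$ norm and applying Minkowski successively in $L^{q/2}_t$ and $L^{r/2}_\rho$—both permitted because $q,r\ge 2$—gives
\begin{align*}
\Big\|\sum_k g_k\Big\|_{L^q_tL^r_\rho L^2_\sigma}\le \Big(\sum_k\|g_k\|_{L^q_tL^r_\rho L^2_\sigma}^2\Big)^{1/2}.
\end{align*}
Inserting the single-block estimate into the right-hand side delivers the inequality claimed in the lemma.

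The main technical point is the mixed-norm Littlewood--Paley square function. The cleanest justification proceeds via Khintchine's inequality together with a uniform Mihlin-type multiplier bound for the randomized operator $\sum_k\epsilon_k P_k$ on $L^r_\rho L^2_\sigma$, uniform in signs $(\epsilon_k)$. Since each $P_k$ is a \emph{radial} Fourier multiplier, it commutes with rotations and hence preserves the spherical-harmonic decomposition; this reduces the multiplier bound to a scalar $L^r_\rho$ estimate acting diagonally on each spherical-harmonic sector. Alternatively, one may invoke vector-valued Calder\'on--Zygmund theory directly, as $L^2_\sigma$ is Hilbert and therefore UMD.
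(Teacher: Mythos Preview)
Your argument is correct in outline, but it is considerably more involved than what the paper does, and the justification of the key step is not quite right as stated.

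The paper treats the lemma as a direct restatement of \eqref{basic}: replace $f$ by $|\nabla|^{\beta}f$ in \eqref{basic}, use that $|\nabla|^{\pm\beta}$, $D_\sigma^{\hga}$ and $U(t)$ all commute, and observe by Plancherel that
\[
\||\nabla|^{\beta}f\|_{L^2}^2=\int |\xi|^{2\beta}|\widehat f(\xi)|^2\,d\xi\sim\sum_{k\in\ZZ}2^{2k\beta}\|P_kf\|_{L^2}^2.
\]
No square-function reassembly in the mixed norm is needed at all.

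By contrast, your route first splits into dyadic pieces and then reassembles via a Littlewood--Paley inequality in $L^r_\rho L^2_\sigma$. That inequality is true, but neither of your two justifications is complete. The UMD/vector-valued CZ argument applies to spaces of the form $L^r(X;H)$ where the multiplier acts on the base $X$; here the multipliers $P_k$ act on $\mathbb{R}^d$, while $L^r_\rho L^2_\sigma$ is a mixed \emph{polar} norm on scalar functions, not $L^r$ of an $H$-valued function on a base carrying the multiplier. The ``scalar reduction'' via spherical harmonics is closer to the truth, but the resulting Hankel multipliers $T_{n,k}$ depend on the degree $n$, so uniform scalar $L^r_\rho$ bounds for each $n$ do not by themselves give the $\ell^2$-in-$n$ vector-valued bound you need; one still has to invoke a uniform CZ/Mihlin-type estimate and its $\ell^2$-valued extension. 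All of this can be made rigorous, but it is substantial extra work that the direct argument above avoids entirely.
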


When initial data is localized in frequency, it is possible to improve
angular regularity to wider range(for example, see
proof of Lemma 4.1 \cite{cholee} and \cite{glnw}). However, it is not clear that
such estimates can be used to get estimates without frequency
localization.

If we consider interaction of two linear waves of different
frequencies, then we can obtain an improved form of bilinear
Strichartz estimate, which enjoys extra smoothing.
\begin{lem}\label{bi-str}
Let $l \in \mathbb{N}$. Then for $\hga < (d-2)/4$ there exists an
$\epsilon>0$ such that
\[
\Big\|\| D_\sigma^\hga [U(\cdot)P_{\ell}f]\|_{L^2_\sigma}
\|D_\sigma^\hga
[U(\cdot)P_0g]\|_{L^2_\sigma}\Big\|_{L^2(dt|x|^{d-1}d|x|)} \lesssim
2^{\pm\ell(\beta(\alpha, 4, 4) \mp \ep)}\|f\|_2\|g\|_2,
\]
\end{lem}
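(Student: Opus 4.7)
The plan is to combine a bilinear $L^2$ estimate on the dispersion surface $\tau=|\xi|^\alpha$ with a spherical-harmonic decomposition to absorb the angular derivative $D_\sigma^{\hat\gamma}$, and then to extract the $\epsilon$-gain by interpolation. Since the pair $(q,r)=(4,4)$ naturally associated to the LHS lies outside the admissibility range of Lemma \ref{str-radial} for $d\ge 3$, a purely linear application of \eqref{basic} fails, and the frequency separation $|\xi|\sim 2^\ell$ versus $|\xi|\sim 1$ must be exploited in an essential way.

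Writing $F=U(\cdot)P_\ell f$ and $G=U(\cdot)P_0 g$, Plancherel in $t$ places the space-time Fourier transform of $FG$ on the constraint surface $\{(\tau,\xi):\xi=\xi_1+\xi_2,\ \tau=|\xi_1|^\alpha+|\xi_2|^\alpha,\ |\xi_1|\sim 2^\ell,\ |\xi_2|\sim 1\}$. A Cauchy--Schwarz argument on the corresponding slice measure, together with the transversality lower bound $|\nabla_{\xi_1}(|\xi_1|^\alpha+|\xi-\xi_1|^\alpha)|\gtrsim 2^{|\ell|(\alpha-1)}$ in the separated regime, yields the bilinear $L^2_{t,x}$ estimate
\[
\|F\cdot G\|_{L^2_{t,x}}\lesssim 2^{a_\pm \ell}\|f\|_2\|g\|_2
\]
with an exponent $a_\pm$ strictly smaller than what linear Strichartz combined by Hölder would produce.

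To pass from this to the $L^2_\sigma$-separated form demanded by the lemma, I would decompose $f=\sum_L f_L$, $g=\sum_{L'}g_{L'}$ into spherical-harmonic blocks. Since $U(t),P_\ell,P_0$ are radial Fourier multipliers they commute with the angular projections $P_L^\sigma$, so sphere orthogonality expands the squared LHS into $\sum_{L,L'}L^{2\hat\gamma}(L')^{2\hat\gamma}I_{L,L'}$, where each $I_{L,L'}$ involves only $U(t)P_\ell f_L$ and $U(t)P_0 g_{L'}$. Each $I_{L,L'}$ is then controlled by combining the bilinear $L^2_{t,x}$ estimate above with the pointwise spherical-harmonic bound $\|h\|_{L^\infty_\sigma}\lesssim L^{(d-2)/2}\|h\|_{L^2_\sigma}$ on the angular frequency-$L$ subspace, which converts between $L^2_\sigma$ of a product and a product of $L^2_\sigma$ norms at the cost of a factor $L^{(d-2)/2}$.

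Finally, the precise exponent $2^{\pm\ell(\beta(\alpha,4,4)\mp\epsilon)}$ emerges by interpolating the bilinear $L^2$-estimate against the trivial mass bound $\|U(t)f\|_{L^\infty_t L^2_x}=\|f\|_2$. The interpolation parameter is chosen so that the total angular-Sobolev cost stays strictly below $(d-2)/4$; the gap $(d-2)/4-\hat\gamma$ converts directly into the frequency gain $\epsilon>0$, and the resulting summability in $L,L'$ is then automatic. The main obstacle is precisely in this final balancing: one must trade angular regularity for bilinear smoothing without losing either, and the threshold $(d-2)/4$ is exactly the critical exponent at which this trade becomes degenerate in dimensions $d\ge 3$, hence the strict inequality in the hypothesis.
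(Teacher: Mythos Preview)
Your outline has a genuine gap at the conversion step. The quantity to be bounded is the \emph{separated} norm
\[
\big\|\,\|D_\sigma^{\hat\gamma}F\|_{L^2_\sigma}\,\|D_\sigma^{\hat\gamma}G\|_{L^2_\sigma}\big\|_{L^2(dt\,r^{d-1}dr)},
\]
whereas the $d$-dimensional bilinear $L^2_{t,x}$ estimate you invoke (Plancherel plus transversality on $\tau=|\xi|^\alpha$) controls $\|FG\|_{L^2_{t,x}}=\big\|\,\|FG\|_{L^2_\sigma}\big\|_{L^2(dt\,r^{d-1}dr)}$. These are not comparable in the direction you need. The angular Bernstein bound $\|h\|_{L^\infty_\sigma}\lesssim L^{(d-2)/2}\|h\|_{L^2_\sigma}$ only yields $\|F_LG_{L'}\|_{L^2_\sigma}\lesssim L^{(d-2)/2}\|F_L\|_{L^2_\sigma}\|G_{L'}\|_{L^2_\sigma}$, which is the wrong inequality. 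Even after localizing to single harmonics $Y_n^m$, $Y_{n'}^{m'}$ one has
\[
\|F_{n,m}G_{n',m'}\|_{L^2_{t,x}}^2=\|Y_n^mY_{n'}^{m'}\|_{L^2_\sigma}^2\int\!\!\int |c_n^m|^2|d_{n'}^{m'}|^2\,r^{d-1}dr\,dt,
\]
so recovering the radial integral on the right from the bilinear norm on the left requires a uniform \emph{lower} bound on $\|Y_n^mY_{n'}^{m'}\|_{L^2_\sigma}$, which does not exist. Your final interpolation against the mass bound $\|U(t)f\|_{L^\infty_tL^2_x}=\|f\|_2$ is likewise off: that endpoint carries no bilinear smoothing and does not place you at the exponent $\beta(\alpha,4,4)$.

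The paper avoids this entirely by never using the $d$-dimensional bilinear estimate. It expands $U(t)P_\ell f$ and $U(t)P_0g$ via the Hankel transform, so that the separated $L^2_\sigma$ norms become sums $\sum_{n,m}|\mathcal T_n^\ell(a_n^m)(t,r)|^2$ of one-dimensional radial objects from the outset. After the change of variables $\rho\mapsto\rho^{1/\alpha}$, Plancherel in $t$ and Young's inequality (in one dimension) handle the time integral; the radial integral is then computed by the explicit Weber--Schafheitlin formula $\int_0^\infty |J_{\nu(n)}(r\rho)|^2 r^{-(d-2)}\,dr\sim \rho^{d-3}(1+n)^{-(d-2)}$, whose decay in $n$ exactly cancels the weight $(1+n)^{d-2}$ produced by $D_\sigma^{(d-2)/2}$. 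This yields two \emph{asymmetric} endpoint estimates---one with $D_\sigma^{(d-2)/2}$ on the high-frequency factor and none on the low, the other with the roles reversed---carrying exponents $2^{\ell(d-\alpha-1)/2}$ and $2^{\ell/2}$ respectively. Interpolating between these two (not against a mass bound) places $D_\sigma^{\hat\gamma}$ on both factors with $\hat\gamma<(d-2)/4$ and produces the $\epsilon$-gain.
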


\begin{proof} Let  $(Y_n^m)$ be the orthonormal spherical harmonic functions
of order $n$. Using the spherical harmonic expansion of $\widehat f$
and $\widehat g$, $ \widehat f(\rho\sigma) = \sum_{n,
m}a_n^m(\rho)Y_n^m(\sigma),$ $ \widehat g(\rho\sigma) = \sum_{n',
m'}b_{n'}^{m'}(\rho)Y_{n'}^{m'}(\sigma),$
we rewrite $U(t)P_\ell f$ and $U(t)P_0 g$ as
\begin{align*}
&U(t)P_\ell f(x) = r^{-\frac{d-2}2}\sum_{n,m}c_n\mathcal T_n^\ell(a_n^m)(t, r)\, Y_n^m(\sigma),\\
&U(t)P_0 g(x) = r^{-\frac{d-2}2}\sum_{n',m'}c_{n'}\mathcal T_{n'}^0(b_{n'}^{m'})(t, r) \,Y_{n'}^{m'}(\sigma),
\end{align*}
where \[\mathcal T_n^\ell(a_n^m)(t, r) = \int_0^\infty
e^{it\rho^\alpha} J_{\nu(n)}(r\rho) \psi(\rho/2^\ell) \rho^\frac d2
a_n^m(\rho)\,d\rho,\] and  $x = r\sigma$, $|c_n| = c$ independent of
on $n, m$, and $\nu(n) = n + (d-2)/2$. See \cite{sw} for detail.

By the spectral theory we have
\begin{equation}\label{spectral}
\langle D_\sigma^\gamma Y_n^m, D_\sigma^\gamma
Y_{n'}^{m'}\rangle_\sigma = (1 + n(n+d-2))^\gamma \delta_{n,
n'}\delta_{m, m'},
\end{equation}
where $\langle u, v \rangle_\sigma = \int_{S^{d-1}}
u\overline{v}\,d\sigma$ and $\delta_{n,n'}$ is the Kronecker delta.
By  this it follows that for $j=0, \ell$
\begin{align*}
\|U(t)P_j f\|_{L^2_\sigma}^2 &= c^2r^{-(d-2)} \sum_{n,m} |\mathcal T_n^j(a_n^m)(t, r)|^2,\\
\|D^{\frac {d-2}2}_\sigma U(t)P_j f\|_{L^2_\sigma}^2 &=
c^2r^{-(d-2)} \sum_{n, m} (1 + n(n + d -2))^{\frac {d-2}2} |\mathcal
T_n^j(a_n^m)(t,r)|^2.
\end{align*}
And by the change of variables $\rho \mapsto \rho^{\frac 1\alpha}$,
we obtain
\begin{align*}
\|D_\sigma^{\frac {d-2}2} &U(t)P_\ell f\|_{L^2_\sigma}^2
\|U(t)P_0g\|_{L^2_\sigma}^2= c^4r^{-2(d-2)}
\sum_{n,n',l,l'}(1 + n(n + d - 2))^{\frac {d-2}2}|\mathcal T_n^\ell(a_n^m)\overline{\mathcal T_{n'}^0(b_{n'}^{m'})}|^2\\
&\lesssim r^{-2(d-2)}\sum_{n,n',m,m'} (1 + n(n + d -2))^{\frac {d-2}2} \left|\int^\infty_0\int^\infty_0 e^{it(\rho - \rho')} F^m_n(\rho) G^{m'}_{n'}(\rho')\, d\rho d\rho'\right|^2\\
&\lesssim  r^{-2(d-2)}\sum_{n,n',m,m'} (1 + n(n + d -2))^{\frac
{d-2}2}|\widehat{\widetilde{F^m_n}*\overline{G^{m'}_{n'}}}(t)|^2,
\end{align*}
where $\widetilde{F_n^m}(\rho) = F_n^m(-\rho)$, and
\begin{align*}
&F_n^m(\rho) = \chi_{(0, \infty)}(\rho)J_{\nu(n)}(r\rho^\frac1\alpha)(\rho)^\frac{d-2(\alpha-1)}{2\alpha} \psi(\rho^\frac1\alpha/2^\ell)a_n^m(\rho^\frac1{\alpha}),\\
&G_n^m(\rho') = \chi_{(0, \infty)}(\rho')J_{\nu(n)}(r{\rho'}^\frac1\alpha) (\rho')^\frac{d-2(\alpha-1)}{2\alpha} \psi({\rho'}^\frac1\alpha)b_{n}^{m}({\rho'}^\frac1\alpha).
\end{align*}
Taking $L^2$-norm with respect to $t$-variable, from Plancherel's theorem and Young's convolution inequality, we get
\begin{align}\begin{aligned}\label{sigma-t int}
&\Big\| \|D^{\frac {d-2}2}_\sigma U(t) P_\ell f\|_{L^2_\sigma} \|U(t)P_0g\|_{L^2_\sigma} \Big\|_{L^2(dt)}\\
&\qquad \lesssim  r^{-(d-2)}\Big(\sum_{n,n',m,m'} (1 + n(n + d
-2))^{\frac {d-2}2}
\|F^m_n\|_{L^2(d\rho)}^2\|G^{m'}_{n'}\|_{L^1(d\rho')}^2\Big)^\frac
12.
\end{aligned}
\end{align}
We then take $L^2(r^{d-1}dr)$ on both sides of \eqref{sigma-t int} to obtain
\begin{align}\begin{aligned}\label{sigma-tr int}
&\Big\| \|D^{\frac {d-2}2}_\sigma U(t) P_\ell f\|_{L^2_\sigma} \|U(t)P_0g\|_{L^2_\sigma} \Big\|_{L^2(r^{d-1}drdt)}^2\\
&\qquad\lesssim (1+n)^{d-2} \sum_{n,n',m,m'} \int^\infty_0 r^{-(d-2)} \|F^m_n\|_{L^2(d\rho)}^2\,dr \cdot \sup_{r}(r\|G^{m'}_{n'}\|_{L^1(d\rho')}^2)\\
&\qquad:= \sum_{n,n',m,m'} (1+n)^{d-2} [A^m_n]^2 [B^{m'}_{n'}]^2,
\end{aligned}\end{align}
where \[A^m_n = \Big(\int^\infty_0 r^{-(d-2)}
\|F^m_n\|_{L^2(d\rho)}^2\,dr\Big)^{\frac 12}, \quad B^{m'}_{n'} =
\Big(\sup_{r}(r\|G^{m'}_{n'}\|_{L^1(d\rho')}^2)\Big)^{\frac 12}.\]
Making the change of variables $\rho \mapsto \rho^\alpha$, we have
for $A_n^m$ that
$$
[A_n^m]^2 = \int_0^\infty \left(\int_0^\infty  |J_{\nu(n)}(r\rho)|^2r^{-(d-2)}\,dr\right) \rho^{d-(\alpha-1)}(\psi(\rho/2^\ell))^2|a_n^m(\rho)|^2\,d\rho.
$$
From the Bessel function estimate (see p. 403 of \cite{wat}) and the Stirling's formula it follows that
\begin{align*}
\int_0^\infty  |J_{\nu(n)}(r\rho)|^2r^{-(d-2)}\,dr &= \frac{(\rho/2)^{d-3}\Gamma(d-2)\Gamma(\nu(n) - \frac{d-3}2)}{2(\Gamma(\frac{d-1}2))^2\Gamma(\nu(n) + \frac{d-1}2)} \lesssim \rho^{d-3}(1 + n)^{-(d-2)}.
\end{align*}
Thus we have
\begin{align}\label{akl}
[A_n^m]^2 \lesssim (1 + n)^{-(d-2)}2^{\ell(d- \alpha-1)}\int |a_n^m(\rho)|^2\rho^{d-1}\,d\rho.
\end{align}
For $B_{n'}^{m'}$, after change of variables, we estimate
\begin{align*}
[B_{n'}^{m'}]^2 &\lesssim \sup_r r\left( \int |J_{\nu(n')}(r\rho')| {\rho'}^\frac d2 \psi(\rho')|b_{n'}^{m'}(\rho')|\,d\rho'\right)^2\\
&\lesssim \sup_r \int_{\rho' \sim 1} |J_{\nu(n')}(r\rho')|^2r\rho'\,d\rho'\int |b_{n'}^{m'}(\rho')|^2 {\rho'}^{d-1}\,d\rho'\\
&\lesssim \sup_r \int_{\rho'\sim r}|J_\nu(n')(\rho')|^2\,d\rho'\int |b_{n'}^{m'}(\rho')|^2 {\rho'}^{d-1}\,d\rho'.
\end{align*}
Since $\sup_r (\int_{\rho'\sim r}|J_\nu(n')(\rho')|^2\,d\rho') \lesssim 1$ (see (3.4) of \cite{cholee}),
we have
\begin{align}\label{bkl}
[B_{n'}^{m'}]^2 \lesssim \int |b_{n'}^{m'}(\rho')|^2 {\rho'}^{d-1}\,d\rho'.
\end{align}
Plugging \eqref{akl}, \eqref{bkl} into \eqref{sigma-tr int}, we
obtain
\begin{align}\begin{aligned}\label{aklbkl}
&\Big\| \|D^{\frac {d-2}2}_\sigma U(t) P_\ell f\|_{L^2_\sigma} \|U(t)P_0g\|_{L^2_\sigma} \Big\|_{L^2(r^{d-1}drdt)}\\
&\qquad \lesssim 2^{\ell(d-\alpha-1)/2}\left(\sum_{n,m}\int |a_n^m|^2\rho^{d-1}\,d\rho\right)^\frac12\left(\sum_{n',m'}\int |b_{n'}^{m'}|^2{\rho'}^{d-1}\,d\rho'\right)^\frac12\\
&\qquad \lesssim 2^{\ell(d-\alpha-1)/2} \|f\|_2\|g\|_2.
\end{aligned}\end{align}

On the other hand, repeating the above argument we have
\begin{align*}
&\|U(t)P_\ell f\|_{L^2_\sigma}^2 \|D^{\frac {d-2}2}_\sigma U(t)P_0g\|_{L^2_\sigma}^2\\
&\qquad \lesssim  r^{-(d-2)}\sum_{n, n', m, m'} (1 + n'(n' + d
-2))^{\frac {d-2}2}
|\widehat{\widetilde{F^m_n}*\overline{G^{m'}_{n'}}}(t)|^2.
\end{align*}
Hence by using Young's convolution inequality again as in \eqref{sigma-t int}, we get
\begin{align}\begin{aligned}\label{sigma-t int2}
&\Big\| \|U(t) P_\ell f\|_{L^2_\sigma} \|D^{\frac {d-2}2}_\sigma U(t)P_0g\|_{L^2_\sigma} \Big\|_{L^2(dt)}\\
&\qquad\lesssim  r^{-(d-2)}\Big(\sum_{n, n',m,m'} (1 + n'(n' + d
-2))^{\frac {d-2}2}
\|F^m_n\|_{L^1(d\rho)}^2\|G^{m'}_{n'}\|_{L^2(d\rho')}^2\Big)^\frac
12.
\end{aligned}\end{align}
And we also have
\begin{align*}
&\Big\| \|U(t) P_\ell f\|_{L^2_\sigma} \|D^{\frac {d-2}2}_\sigma U(t)P_0g\|_{L^2_\sigma} \Big\|_{L^2(r^{d-1}drdt)}^2
\lesssim \sum_{n,n',m,m'} (1+n)^{d-2} [\widetilde{A}^m_n]^2
[\widetilde{B}^{m'}_{n'}]^2,
\end{align*}
where $\widetilde{A}^m_n =
(\sup_{r}(r\|F^m_n\|_{L^1(d\rho')}^2))^{\frac 12}$ and
$\widetilde{B}^{m'}_{n'} = (\int^\infty_0 r^{-(d-2)}
\|G^{m'}_{n'}\|_{L^2(d\rho)}^2\,dr)^{\frac 12}$. Changing the role
of $F_n^m$ and $G^{m'}_{n'}$ in \eqref{akl} and \eqref{bkl}, one can
easily show that
\begin{align*}
[\widetilde{A}_n^m]^2 \lesssim 2^{\ell}\int
|a_n^m(\rho)|^2\rho^{d-1}\,d\rho, \, \, [\widetilde{B}_{n'}^{m'}]^2
\lesssim (1 + n')^{-(d-2)}\int |b_{n'}^{m'}(\rho')|^2
{\rho'}^{d-1}\,d\rho',
\end{align*}
which implies
\begin{align}\begin{aligned}\label{bklakl}
&\Big\| \|U(t) P_\ell f\|_{L^2_\sigma} \|D^{\frac {d-2}2}_\sigma
U(t)P_0g\|_{L^2_\sigma} \Big\|_{L^2(r^{d-1}drdt)} \lesssim
2^{\ell/2} \|f\|_2\|g\|_2.
\end{aligned}\end{align}
Finally, interpolation between \eqref{aklbkl} and \eqref{bklakl}
gives the desired estimate.
\end{proof}

In \cite{chkl2} a refined Strichartz estimate is shown for radial
function. We extend it to the functions with angular regularity.
This extension will play a crucial role in proving profile
decomposition of angularly regular data. Here we combine the
argument in \cite{chkl2, chl} with the spherical harmonic expansion.

For $\alpha<2$, we say the pair $(q,r)$ $\al-$admissible, if $\frac
\al q + \frac dr = \frac d2$ for $2 \le q, r \le \infty$.
\begin{prop}\label{ref-str-prop}
Let $\frac{d}{d-1} < \al < 2$, $q > 2$ and $r \neq \infty$. Then for each $\alpha$-admissible pair
$(q,r)$ there exist $\theta, p$ with $\theta
\in (0,1), p \in [1, 2)$ such that for any $\hga < \frac {d-1}2 - \frac 1q - \frac {d-1}r$,
\begin{align}\label{ref-str}
\|D_\sigma^{\hga}U(\cdot)f\|_{L^q_tL^r_\rho L^2_\sigma} \lesssim \big(\sup_k 2^{kd(\frac 12 -
\frac 1p)} \|\widehat{P_kf}\|_p\big)^\theta \|f\|_2^{1-\theta}.
\end{align}
\end{prop}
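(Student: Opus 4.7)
The plan is to follow the refined-Strichartz scheme of \cite{chkl2}, adapting it to angularly regular data by using the bilinear estimate of Lemma \ref{bi-str} as the source of off-diagonal decay. The first step is to bilinearize via
\[
\|D_\sigma^{\hga}U(\cdot)f\|^2_{L^q_tL^r_\rho L^2_\sigma} = \bigl\|\,|D_\sigma^{\hga}U(t)f|^2\bigr\|_{L^{q/2}_tL^{r/2}_\rho L^1_\sigma}
\]
(valid since $q,r>2$), then to insert the Littlewood--Paley decomposition $f=\sum_j P_jf$ and expand $|D_\sigma^{\hga}U(t)f|^2=\sum_{j,j'}v_j\overline{v_{j'}}$ with $v_j:=D_\sigma^{\hga}U(t)P_jf$. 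Grouping terms by $\ell:=|j-j'|$ (a Whitney-type decomposition in $\mathbb Z$), the task reduces to bounding, for each $\ell\geq 0$, the sum $\sum_j v_j\overline{v_{j+\ell}}$ in $L^{q/2}_tL^{r/2}_\rho L^1_\sigma$, with geometric summability in $\ell$.

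Two a priori bilinear bounds are then combined by interpolation. The first is an $L^2_{t,\rho}L^1_\sigma$ bilinear bound with off-diagonal decay: Cauchy--Schwarz in $\sigma$ followed by a rescaled version of Lemma \ref{bi-str} yields, for $\hga<(d-2)/4$,
\[
\|v_j\overline{v_{j+\ell}}\|_{L^2(dt\,\rho^{d-1}d\rho)L^1_\sigma}\lesssim 2^{b_1 j+b_2(j+\ell)}\,2^{-\ep\ell}\,\|P_jf\|_2\|P_{j+\ell}f\|_2,
\]
for some exponents $b_1,b_2$ determined by rescaling and some $\ep>0$. The second is a trivial $L^\infty$-type bound driven by $A:=\sup_k 2^{kd(1/2-1/p)}\|\widehat{P_kf}\|_p$: Fourier inversion and H\"older on the annulus $\{|\xi|\sim 2^j\}$ give $\|U(\cdot)P_jf\|_{L^\infty_{t,x}}\lesssim 2^{jd(1-1/p)}\|\widehat{P_jf}\|_p\lesssim 2^{jd/2}A$, and the spectral orthogonality \eqref{spectral} (together with the fact that $D_\sigma^{\hga}$ commutes with the radial multiplier $P_j$ and the Fourier transform) upgrades this to a bound of the form $\|v_j\|_{L^\infty_{t,\rho}L^2_\sigma}\lesssim 2^{jd/2}A$, with the angular multiplier absorbed into the spherical-harmonic weights.

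Interpolating these two bounds in the mixed norm $L^{q/2}_tL^{r/2}_\rho L^1_\sigma$ and using the admissibility $\alpha/q+d/r=d/2$ to eliminate the $j$-dependent prefactors yields, for some $\theta\in(0,1)$ and $\ep'>0$,
\[
\|v_j\overline{v_{j+\ell}}\|_{L^{q/2}_tL^{r/2}_\rho L^1_\sigma}\lesssim 2^{-\ep'\ell}\,A^{2(1-\theta)}\bigl(\|P_jf\|_2\|P_{j+\ell}f\|_2\bigr)^\theta.
\]
Summing over $j$ by Cauchy--Schwarz together with $\sum_j\|P_jf\|_2^2=\|f\|_2^2$, and summing the geometric series in $\ell$, produces $\|v\|^2_{L^q_tL^r_\rho L^2_\sigma}\lesssim A^{2(1-\theta)}\|f\|_2^{2\theta}$, which is the square of \eqref{ref-str} after relabeling $\theta\leftrightarrow 1-\theta$.

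The main obstacle I expect is the interpolation in the last step when $q\ne r$: the mixed norm $L^{q/2}_tL^{r/2}_\rho L^1_\sigma$ does not sit on a straight segment between the $L^2_{t,\rho}L^1_\sigma$ and the $L^\infty_{t,x}$ bound, forcing either a multi-step complex interpolation or the introduction of an intermediate admissible pair. A secondary difficulty is selecting $p\in[1,2)$ (and hence $\theta$) compatibly with both the angular constraint $\hga<(d-2)/4$ required by Lemma \ref{bi-str} and the stated range $\hga<(d-1)/2-1/q-(d-1)/r$, while tracking the angular derivative $D_\sigma^{\hga}$ carefully through the $L^\infty$ and Fourier--H\"older steps.
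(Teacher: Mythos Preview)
Your bilinearize-then-interpolate scheme has a genuine gap, and it is not the mixed-norm interpolation you flag. The real obstruction is twofold. First, the $j$-dependent prefactors cannot be eliminated by admissibility: both of your endpoints carry a strictly positive scaling weight (the rescaled $L^2_{t,\rho}$ bilinear bound from Lemma~\ref{bi-str} scales like $2^{2j\beta(\alpha,4,4)}$ with $\beta(\alpha,4,4)=(d-\alpha)/4>0$, and your $L^\infty$ bound scales like $2^{jd}$), so every convex combination of the two stays strictly positive and never reaches the admissible value $\beta(\alpha,q,r)=0$. Second, even granting a single-term bound of the shape $2^{-\ep'\ell}A^{2(1-\theta)}(\|P_jf\|_2\|P_{j+\ell}f\|_2)^\theta$ with $\theta<1$, the sum $\sum_j(\|P_jf\|_2\|P_{j+\ell}f\|_2)^\theta$ is \emph{not} controlled by $\|f\|_2^{2\theta}$ (take $\|P_jf\|_2\sim j^{-1}$); the Cauchy--Schwarz you invoke does not help since $\sum_j\|P_jf\|_2^{2\theta}$ can diverge. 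The Plancherel-in-$t$ almost-orthogonality that would rescue such a sum is available only when the bilinearized norm is $L^2_t$, not in a general $L^{q/2}_t$.

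The paper's route sidesteps both issues by working linearly at three auxiliary estimates and interpolating among them, rather than bilinearly at the target exponent. The bilinear Lemma~\ref{bi-str} is used only to prove the $\ell^4$ refinement \eqref{l4} at the single exponent $(4,4)$ (Lemma~\ref{44-str}), where the squared norm lands in $L^2_{t,\rho}$ and time-Fourier support disjointness gives the needed orthogonality for the off-diagonal sum over $k$. Separately, an $\ell^1$-type bound \eqref{ref-3} with $p_*<2$ is built by interpolating a Strichartz estimate at a non-admissible pair with $\beta<0$ against an $L^6$ bound obtained from $L^4$ and the crude $\|U(\cdot)P_0f\|_{L^\infty}\lesssim\|\widehat{P_0f}\|_1$; here the sum in $k$ is $\ell^1$ and triangle inequality is harmless. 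These two, together with the basic $\ell^2$ Strichartz bound \eqref{ref-1}, are interpolated to reach \eqref{intpol} with $p_0<2<q_0$, after which one peels off $q_0-2$ copies into the $\sup_k$ and controls the remaining $\ell^2$ sum by $\|f\|_2^2$. The missing idea in your plan is precisely this detour through $(4,4)$ and through a point with $\beta<0$, so that the interpolation region contains the admissible line in its interior and the outer summability exponent exceeds $2$.
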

In order to show Proposition \ref{ref-str-prop}, we need the
following lemma.
\begin{lem}\label{44-str} Let $d\ge 3$. Then for $\hga < (d-2)/4$
\begin{align}\label{l4}\|D_\sigma^{\hga}U(\cdot)f\|_{L^4_tL^4_\rho L^2_\sigma} \lesssim \Big(\sum_k(2^{k\beta(\al, 4,4)}
\|\widehat{P_kf}\|_2)^4\Big)^{\frac 14}.\end{align}
\end{lem}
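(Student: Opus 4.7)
I would follow a Bourgain-type refined Strichartz approach, using the bilinear estimate of Lemma \ref{bi-str} as the key input. Set $u_k = U(t)P_k f$, $G_k = D_\sigma^\hga u_k$, and $a_k = 2^{k\beta(\al,4,4)}\|\widehat{P_kf}\|_2$. Since
\[\|D_\sigma^\hga U(\cdot)f\|_{L^4_tL^4_\rho L^2_\sigma}^4 = \Bigl\|\,\|D_\sigma^\hga U(t)f\|_{L^2_\sigma}^2\,\Bigr\|_{L^2(dt\,r^{d-1}dr)}^2,\]
expanding $\|\sum_k G_k\|_{L^2_\sigma}^2 = \sum_{k,k'} H_{k,k'}$ with $H_{k,k'}(t,r) := \int G_k\overline{G_{k'}}\,d\sigma$ reduces the task to bounding the square of the $L^2(dt\,r^{d-1}dr)$-norm of $F^2 := \sum_{k,k'} H_{k,k'}$ by $\|a\|_{\ell^4}^4$. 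I would split $F^2 = D + 2\operatorname{Re}\Sigma$, with diagonal $D = \sum_k H_{k,k}$ and cross term $\Sigma = \sum_{k<k'}H_{k,k'}$, and estimate the two parts separately.

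For $D$, use the nonnegativity of each $H_{k,k}$ to expand directly
\[\|D\|_{L^2(dt\,r^{d-1}dr)}^2 = \sum_{k,k'}\bigl\|\|G_k\|_{L^2_\sigma}\,\|G_{k'}\|_{L^2_\sigma}\bigr\|_{L^2(dt\,r^{d-1}dr)}^2.\]
Rescaling Lemma \ref{bi-str} by the lower of the two frequencies (and picking the appropriate sign in the $\pm$-exponent) gives each summand the bound $(2^{-|k-k'|\eta}\,a_k a_{k'})^2$ for some $\eta > 0$, with $a_k^4$ on the diagonal $k=k'$ (coming from the $\ell=0$ case of Lemma \ref{bi-str}). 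The double sum is then dominated by $\|a\|_{\ell^4}^4$ using $\sum_k a_k^2 a_{k+\ell}^2 \le \|a\|_{\ell^4}^4$ (Cauchy--Schwarz on sequences) together with $\sum_\ell 2^{-2\ell\eta} < \infty$.

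For $\Sigma$, I would exploit almost-orthogonality in the $t$-Fourier variable. From the spherical harmonic expansion used in the proof of Lemma \ref{bi-str}, each $\mathcal{T}_n^k(A_n^{m})$ has $t$-Fourier support in the positive dyadic annulus $\tau \in [2^{(k-1)\alpha},2^{(k+1)\alpha}]$; hence for $k \le m - C$ with $C$ large enough the product $\mathcal{T}_n^k\overline{\mathcal{T}_n^m}$ has $t$-Fourier support cleanly inside the negative annulus $|\tau|\sim 2^{m\alpha}$. Writing $\Sigma = \sum_m S_m^{\mathrm{far}} + \Sigma^{\mathrm{close}}$ with $S_m^{\mathrm{far}} := \sum_{k\le m-C}H_{k,m}$, the $S_m^{\mathrm{far}}$ then have dyadically disjoint $t$-Fourier supports uniformly in $r$, so Plancherel in $t$ followed by integration in $r^{d-1}dr$ yields
\[\Bigl\|\sum_m S_m^{\mathrm{far}}\Bigr\|_{L^2(dt\,r^{d-1}dr)}^2 \lesssim \sum_m \|S_m^{\mathrm{far}}\|_{L^2(dt\,r^{d-1}dr)}^2,\]
while $\Sigma^{\mathrm{close}}$ contains only $O(1)$ pairs per index and is absorbed by triangle inequality. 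Inside each $S_m^{\mathrm{far}}$, triangle and the rescaled Lemma \ref{bi-str} produce $\|S_m^{\mathrm{far}}\|_{L^2(dt\,r^{d-1}dr)} \lesssim a_m(\phi\ast a)_m$ with $\phi_k := 2^{-k\eta}\mathbf{1}_{k\ge C}$. Cauchy--Schwarz on $\ell_m^2$ together with Young's inequality $\ell^1\ast\ell^4\hookrightarrow\ell^4$ finally gives $\sum_m a_m^2(\phi\ast a)_m^2 \le \|a\|_{\ell^4}^2\|\phi\ast a\|_{\ell^4}^2 \lesssim \|a\|_{\ell^4}^4$, which completes the estimate.

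The main obstacle, I expect, is justifying the $t$-Fourier almost-orthogonality of the $S_m^{\mathrm{far}}$ uniformly in the spherical indices and in $r$, together with the clean separation of the ``close'' pairs (whose $t$-Fourier support drifts toward the origin) from the cleanly annular ``far'' pairs. Once the almost-orthogonality is established, the $\ell^4$ combinatorics is a routine high-low Bourgain computation, and the desired bound $\lesssim\bigl(\sum_k a_k^4\bigr)^{1/4}$ follows by taking a fourth root.
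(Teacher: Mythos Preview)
Your proposal is correct and follows essentially the same strategy as the paper: square the $L^4$-norm to an $L^2$ bilinear quantity, control products of frequency-localized pieces by the rescaled Lemma~\ref{bi-str}, and handle the well-separated pairs via $t$-Fourier almost-orthogonality coming from the high frequency. The only cosmetic difference is bookkeeping: the paper indexes the double sum by the gap $j=k'-k$ (establishing orthogonality in $k$ for fixed $|j|>3$ and then summing geometrically in $j$), whereas you index by the high frequency $m$ (orthogonality across $m$ for $S_m^{\mathrm{far}}$, then a geometric sum in $k$ inside each block); the ``close'' pairs in your scheme are exactly the paper's $|j|\le 3$ case and are indeed dominated pointwise by the diagonal term $D$.
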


\begin{proof}
By Littlewood-Paley decomposition we write
\begin{align*}
\langle D_\sigma^\hga U(t)f, D_\sigma^\hga U(t)f \rangle_\sigma =
\sum_{j =-\infty}^\infty \sum_k \langle D_\sigma^\hga U(t)P_kf,
D_\sigma^\hga U(t)P_{j+k}f \rangle_\sigma.
\end{align*}
For \eqref{l4} it is sufficient to show that  for some $\epsilon>0$
\begin{align}\label{l2-half}
 \|\sum_k \langle D_\sigma^\hga U(\cdot) P_kf, D_\sigma^\hga U(\cdot) P_{j+k}f \rangle_\sigma \|_{L^2_tL^2_\rho}
\lesssim
2^{-|j|\epsilon} \Big(\sum_k \big(2^{k\beta(\al,
4,4)}\|\widehat{P_kf}\|_2\big)^4\Big)^{1/2}.
\end{align}

We show the cases $|j|\le 3$ and  $|j| > 3$, separately. Let us
first consider the case $|j|\le 3$. By the H\"older's and the Cauchy-Schwarz
inequalities, we have
\begin{align*}
&\qquad\qquad \Big|\sum_k \langle D_\sigma^\hga U(t) P_kf,
\,D_\sigma^\hga U(t) P_{j+k}f \rangle_\sigma \Big|^2
\lesssim \Big(\sum_k \|D_\sigma^\hga U(t) P_kf \,D_\sigma^\hga U(t) P_{j+k}f\|_{L^1_\sigma}\Big)^2\\
& \lesssim \Big(\sum_k \|D_\sigma^\hga U(t)
P_kf\|_{L^2_\sigma}\|D_\sigma^\hga U(t)
P_{j+k}f\|_{L^2_\sigma}\Big)^2\lesssim \sum_{l=-\infty}^\infty
\sum_k \Big(\|D_\sigma^\hga U(t)P_kf\|_{L^2_\sigma}\|D_\sigma^\hga
U(t)P_{k+l}f\|_{L^2_\sigma}\Big)^2.
\end{align*}
Hence we get
\begin{align*}
\mbox{LHS of}\;\eqref{l2-half} \lesssim \sum_{l=-\infty}^\infty
\sum_k \Big\| \|D_\sigma^\hga U(t)P_k
f\|_{L^2_\sigma}\|D_\sigma^\hga
U(t)P_{k+l}f\|_{L^2_\sigma}\Big\|_{L_t^2L_\rho^2}^2.
\end{align*}
Lemma \ref{bi-str} and the Cauchy-Schwarz inequality imply
\eqref{l2-half} when $|j|\le 3$.

We now consider the case $|j| > 3$. As previously, using the
spherical harmonic expansion such that $\widehat f = \sum a_n^m
Y_n^m$ and $\widehat g = \sum b_{n'}^{m'} Y_{n'}^{m'}$  and making the change of
variables $\rho \mapsto \rho^\alpha$,  we get
\begin{align*}
&\langle D_\sigma^\hga [U(t)P_k f], D_\sigma^\hga[U(t) P_{j+k}g]\rangle_\sigma\\
 &\qquad = c^2\alpha^{-2}r^{-(d-2)}\sum_{n, m} (1 + n(n+d-2))^\frac{d-\hga}4 \int_\mathbb R\!\!\int_\mathbb R e^{it(\rho' - {\rho''})}F_n^m(\rho')\overline{G_n^m(\rho'')}\,d\rho' d\rho''\\
 &\qquad = c^2\alpha^{-2}r^{-(d-2)}\sum_{n, m} (1 + n(n+d-2))^\frac{d-\hga}4 (\widehat{\widetilde{F_{n}^{m}} * \overline{G_{n}^{m}}})(t),
\end{align*}
where
\begin{align*}
&F_{n}^{m}(\rho) = \chi_{(0, \infty)}(\rho)J_{\nu(n)}(r{\rho}^\frac1\alpha)\rho^\frac{d-2(\alpha-1)}{2\alpha} \psi({\rho}^\frac1\alpha/2^k)a_{n}^{m}({\rho}^\frac1{\alpha}),\\
&G_{n}^{m}(\rho') = \chi_{(0, \infty)}(\rho')J_{\nu(n)}(r{\rho'}^\frac1\alpha) (\rho')^\frac{d-2(\alpha-1)}{2\alpha} \psi({\rho'}^\frac1\alpha/2^{j+k})b_{n}^{m}({\rho'}^\frac1\alpha).
\end{align*}
Hence the Fourier support of $\langle D_\sigma^\hga
[U(t)P_k f], D_\sigma^\hga[U(t) P_{j+k}g]\rangle_\sigma$
with respect to $t$ is $\{\tau : 2^{\alpha(j+k-2)} \leq \tau \leq
2^{\alpha(j+k+2)}\}$. So the Fourier supports of $\langle
D_\sigma^\hga U(t)P_kf, D_\sigma^\hga U(t)P_{j+k}f\rangle_\sigma$
with respect to $t$ are boundedly overlapping. Then Plancherel's
theorem in $t$ gives
\begin{align*}
\mbox{LHS of}\;\eqref{l2-half} &= \|\sum_k \langle D^\hga_\sigma U(\cdot) P_kf, \, D^\hga_\sigma
U(\cdot)P_{j+k}f \rangle_\sigma \|_{L^2_\rho L^2_t}^2
\\
&\lesssim \sum_k \| \langle D_\sigma^\hga  U(t)P_k f, \,
D_\sigma^\hga  U(t)P_{j+k}f \rangle_\sigma \|_{L_\rho^2L_t^2}^2.
\end{align*}
Hence using Cauchy-Schwarz and Lemma \ref{bi-str} we see that
\begin{align*}
&\mbox{LHS of}\;\eqref{l2-half} \lesssim \sum_k \big\| \|
D_\sigma^\hga U(t)P_k f\|_{L^2_\sigma}
\|D_\sigma^\hga  U(t)P_{j+k}f \|_{L^2_\sigma} \big\|_{L_t^2L_\rho^2}^2\\
&\lesssim \sum_k
2^{-|j|\epsilon}2^{2k\beta(\al,4,4)}\|P_kf\|_2^22^{2(j+k)\beta(\al,4,4)}\|P_{j+k}f\|_2^2
\lesssim 2^{-|j|\epsilon} \sum_k (2^{k\beta(\al,4,4)}\|P_kf\|_2)^4.
\end{align*}
For the last inequality we used the Cauchy-Schwarz inequality. This completes the proof.
\end{proof}
\noindent Now we are ready to prove Proposition \ref{ref-str-prop}.
\begin{proof} To begin with we note that $\beta(\alpha,q,r) = 0$
because $(q,r)$ is $\alpha$-admissible. From Lemma \ref{str-radial}
we have
\begin{align}\label{ref-1}
\|D_\sigma^{\hga}U(\cdot)f\|_{L^q_tL^r_\rho L^2_\sigma} \lesssim
\Big(\sum_k \|\widehat{P_kf}\|_2^2\Big)^{1/2}
\end{align}
for any $\al$-admissible pair $(q, r)$. Then \eqref{ref-str} follows
from interpolation of \eqref{ref-1} and the following two estimates:
for some $p_*,q_*$ with $p_* < 2 < q_*$,
\begin{align}
\|D_\sigma^{\hga}U(\cdot)f\|_{L^q_tL^r_\rho L^2_\sigma} &\lesssim \Big(\sum_k \| \widehat{P_kf} \|_2^{q_*}\Big)^{1/{q_*}},\label{ref-2}\\
\|D_\sigma^{\hga}U(\cdot)f\|_{L^q_tL^r_\rho L^2_\sigma} &\lesssim \sum_k 2^{kd(\frac 12 - \frac 1{p_*}) } \| \widehat{P_kf}
\|_{p_*}.\label{ref-3}
\end{align}
In fact, the interpolation among \eqref{ref-1}, \eqref{ref-2} and
\eqref{ref-3} gives
\begin{equation}\label{intpol}
\|D_\sigma^{\hga}U(\cdot)f\|_{L^q_tL^r_\rho L^2_\sigma} \lesssim \Big(\sum_k\big(2^{kd(\frac 12 -
\frac 1{p_0})} \| \widehat{P_kf} \|_{p_0}\big)^{q_0}\Big)^{1/{q_0}}
\end{equation} for $(1/{q_0},1/{p_0})$ contained in the triangle
with the vertices $(1/2,1/2)$, $(1/{p_*}, 1)$ and $(1/2,
1/{q_*})$. So, there exist $q_0,p_0$, $p_0 < 2 < q_0$, for which
\eqref{intpol} holds. Hence,
\begin{align*}
\|D_\sigma^{\hga}U(\cdot)f\|_{L^q_tL^r_\rho L^2_\sigma} &\lesssim \Big(\Big(\sup_k 2^{kd(\frac 12 -
\frac 1{p_0})} \|\widehat{P_kf}\|_{p_0}\Big)^{q_0-2} \sum_k
\big(2^{kd(\frac 12 - \frac 1{p_0})} \| \widehat{P_kf}
\|_{p_0}\big)^{2}\Big)^{1/{q_0}}
\\
&\lesssim \Big(\sup_k 2^{kd(\frac 12 - \frac 1{p_0})}
\|\widehat{P_kf}\|_{p_0}\Big)^{(q_0-2)/q_0}\Big(
\sum_k \| \widehat{P_kf} \|_{2}^{2}\Big)^{1/{q_0}}\\
&\lesssim \Big(\sup_k 2^{kd(\frac 12 - \frac 1{p_0})}
\|\widehat{P_kf}\|_{p_0}\Big)^{(q_0-2)/q_0} \|f\|_2^{2/q_0}.
\end{align*}
We need only to set $p=p_0$ and $\theta=1-2/q_0$ to get \eqref{ref-str}.

Now we show \eqref{ref-2} and \eqref{ref-3}. First we consider the
inequality \eqref{ref-3}.  Let $(q,r)$  be an $\alpha$-admissible
pair with $2<q<\infty$. Since $\hga < \frac {d-1}2 - \frac {1}{q} - \frac {d-1}{r}$, $\hga$ is less than $\frac {d-1}2 - \frac {1}{q_0} - \frac {d-1}{r_0}$ on a small neighborhood of $q,r$. One can easily check that $\frac d2 - \frac d{6} - \frac
\al{6} > 0$. Hence there exist
$2< q_0,r_0<\infty$ such that $\frac d2 - \frac d{r_0} - \frac
\al{q_0} < 0$,  $\frac1{q_0} \le (d-1)(\frac12 - \frac1{r_0})$,
$\hga < \frac {d-1}2 - \frac {1}{q_0} - \frac {d-1}{r_0}$ and
$(\frac1q,\frac1r)=\theta(\frac1{q_0},\frac1{r_0}) + (1 -
\theta)(\frac 16, \frac 16)$, $0<\theta<1$. For this pair $(q_0,
r_0)$ we have $ \|D_\sigma^{\hga}U(\cdot) P_0f
\|_{L^{{q_0}}_tL^{{r_0}}_\rho L^2_\sigma} \lesssim
\|\widehat{P_0f}\|_2$ from \eqref{basic}. Then interpolating this with
the estimate
\[\|D_\sigma^{\hga}U(\cdot) P_0f\|_{L^6_t L^6_\rho L_\sigma^2}
\lesssim \|D_\sigma^{\frac {d-2^+}{6}}U(\cdot) P_0f\|_{L^6_t
L^6_\rho L_\sigma^2} \lesssim \|\widehat{P_0f}\|_{\frac 32},\] which
follows from $\|D_\sigma^{\frac {d-2^+}{4}}U(\cdot) P_0f\|_{L^4_t
L^4_\rho L_\sigma^2} \lesssim \|\widehat{P_0f}\|_{2}$ and the
trivial estimate $\|U(\cdot) P_0f\|_{L^\infty_t L^\infty_\rho
L^2_\sigma} \lesssim \|U(\cdot) P_0f\|_{L^\infty_t L^\infty_\rho
L^\infty_\sigma}$ $\lesssim \|\widehat{P_0f}\|_1$, implies
$$\|D_\sigma^{\hga}U(\cdot) P_0f\|_{L^q_tL^r_\rho L^2_\sigma}
\lesssim \|\widehat{P_0f}\|_{p_*}$$  for some $p_*$ with $1<p_*<2$.
By rescaling it follows $\|D_\sigma^{\hga}U(\cdot)
P_kf\|_{L^q_tL^r_\rho L^2_\sigma} \lesssim 2^{kd(\frac 12 - \frac
1{p_*})}\|\widehat{P_kf}\|_{p_*}$. Now Minkowski's inequalities
give
\begin{align*}
\|D_\sigma^{\hga}U(\cdot)f\|_{L^q_tL^r_\rho L^2_\sigma} &\le \sum_k \|D^\hga_\sigma U(\cdot)
P_kf\|_{L^q_tL^r_\rho L^2_\sigma} \lesssim \sum_k
2^{kd(\frac 12 - \frac
1{p_*})}\|\widehat{P_kf}\|_{p_*}.
\end{align*}

Finally, the proof of \eqref{ref-2} is easy. It can be done by
interpolating \eqref{l4} with the estimates in Lemma
\ref{str-radial} for $(q, r)$ with $\frac d\al(\frac12-\frac1r) <
\frac1q \le (d-1)\Big(\frac12-\frac1r\Big)$, $2 \le q, r \le
\infty$. This completes the proof.
\end{proof}

\section{Linear profile decomposition}
In this section we prove Theorem \ref{main}. Throughout this section
we assume that $\frac{d}{d-1} < \alpha < 2$, the pair $(q ,r)$ is
$\alpha$-admissible with $\frac{d}2(\frac12 - \frac1r) < \frac1q <
(d-1)(\frac12-\frac1r)$, and $\hga < \frac {d-1}2 - \frac 1q - \frac
{d-1}r$.

\subsection{Preliminary decomposition}
Thanks to the refined Strichartz estimate \eqref{ref-str}, we can
extract frequencies and scaling parameters  to get a preliminary
decomposition.

\begin{prop}\label{prop-decom1}
Let $(u_n)_{n\geq1}$ be a sequence of complex valued functions
satisfying $\|u_n\|_{L^2_\rho H^\gamma_\sigma} \leq 1$ for some
$\gamma \ge 0$. Then for any $\delta > 0$, there exists $N =
N(\delta)$, $\rho^j_n \in (0,\infty)$ and  $(f^j_n)_{1 \leq j \leq
N, n \geq 1} \subset L^2_\rho H^\gamma_\sigma$ such that
$$u_n = \sum^N_{j=1} f_n^j + q^N_n$$
and  the following properties hold:
\begin{enumerate}
\item[$1)$] There exists a compact set $K = K(N)\subset \{\xi : R_1 < |\xi| < R_2\}$
satisfying that
$$(\rho^j_n)^{d/2}|\widehat{D^\ga_\sigma f^j_n}(\rho^j_n \xi)| \leq C_\delta \chi_K(\xi)\;\;\mbox{for every}\;\; 1 \leq j \leq N,$$
\item[$2)$] $\limsup_{n \rightarrow \infty}(\frac{\rho^j_n}{\rho^k_n}+\frac{\rho^k_n}{\rho^j_n}) =
 \infty,\;\;\mbox{ if}\;\; j \neq
k,$
\item[$3)$] $\limsup_{n \rightarrow \infty} \|U(\cdot)q^N_n\|_{L^q_tL^r_\rho H^{\ga + \hga}_\sigma} \leq \delta \;\;\mbox{for any}\;\;N \geq 1,$
\item[$4)$] $\limsup_{n \rightarrow \infty}(\|u_n\|_{L_\rho^2H_\sigma^\ga}^2 - (\sum_{j=1}^N\|f^j_n\|^2_{L_\rho^2H_\sigma^\ga} + \|q^N_n\|^2_{L_\rho^2H_\sigma^\ga})) = 0.$
\end{enumerate}
\end{prop}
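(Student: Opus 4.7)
The plan is to prove the proposition by an iterative extraction procedure driven by the refined Strichartz estimate of Proposition \ref{ref-str-prop}. Set $q_n^0 := u_n$, and at the $N$th stage suppose inductively that $u_n = \sum_{j<N} f_n^j + q_n^{N-1}$ with properties (1), (2), (4) verified up through index $N-1$. If $\limsup_n \|U(\cdot) q_n^{N-1}\|_{L^q_tL^r_\rho H^{\gamma+\hga}_\sigma} \le \delta$ then stop and property (3) holds at level $N-1$; otherwise pass to a subsequence so that the norm exceeds $\delta$ for all large $n$. Since $D_\sigma^\gamma$ commutes with $U(\cdot)$, this Strichartz norm equals $\|D_\sigma^{\hga}U(\cdot)(D_\sigma^\gamma q_n^{N-1})\|_{L^q_tL^r_\rho L^2_\sigma}$, and Proposition \ref{ref-str-prop} applied to $f = D_\sigma^\gamma q_n^{N-1}$ together with the inductive bound $\|D_\sigma^\gamma q_n^{N-1}\|_{L^2} \le 1$ produces exponents $p\in[1,2)$, $\theta\in(0,1)$ and an integer $k_n^N$ with
\[
2^{k_n^N d(1/2-1/p)}\,\|\widehat{P_{k_n^N} D_\sigma^\gamma q_n^{N-1}}\|_p \;\gtrsim\; \delta^{1/\theta}.
\]
I set $\rho_n^N := 2^{k_n^N}$ and abbreviate $H_n := \widehat{P_{k_n^N} D_\sigma^\gamma q_n^{N-1}}$.

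The profile $f_n^N$ is then defined by a sharp Fourier truncation. Letting $K$ be (a slight enlargement of) the unit Littlewood--Paley annulus and $C_\delta > 0$ a constant to be chosen, set
\[
\widehat{D_\sigma^\gamma f_n^N}(\eta) := H_n(\eta)\,\mathbf{1}_{E_n}(\eta), \quad E_n := \{\eta/\rho_n^N \in K\} \cap \{|H_n(\eta)| \le C_\delta(\rho_n^N)^{-d/2}\}.
\]
Property (1) follows directly. Since $\widehat{D_\sigma^\gamma q_n^N} = \widehat{D_\sigma^\gamma q_n^{N-1}} - \widehat{D_\sigma^\gamma f_n^N}$ has support disjoint from $E_n$, the decomposition satisfies the exact Pythagorean identity $\|q_n^{N-1}\|^2 = \|f_n^N\|^2 + \|q_n^N\|^2$ in $L^2_\rho H^\gamma_\sigma$, iterating to property (4). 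The crux is the mass lower bound $\|f_n^N\|_{L^2_\rho H^\gamma_\sigma} \gtrsim c(\delta) > 0$, which I would establish by Chebyshev--H\"older bookkeeping: the excised set $\{|H_n| > C_\delta(\rho_n^N)^{-d/2}\}$ has Lebesgue measure $\lesssim C_\delta^{-2}(\rho_n^N)^d$ (Chebyshev together with $\|H_n\|_2 \lesssim 1$), so H\"older bounds its $L^p$ mass by $\lesssim C_\delta^{-(2-p)/p}(\rho_n^N)^\beta$ with $\beta := d(1/p-1/2)$. Choosing $C_\delta$ large depending on $\delta$, this is at most $\tfrac{1}{2}\|H_n\|_p$, so the truncated piece retains $L^p$-norm $\gtrsim \delta^{1/\theta}(\rho_n^N)^\beta$; combined with the support volume bound $|E_n| \lesssim (\rho_n^N)^d$, a second application of H\"older converts this back into $\|\widehat{D_\sigma^\gamma f_n^N}\|_2 \gtrsim c(\delta)$ uniformly in $n$.

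For property (2), I would pass to a further subsequence at each stage so that every ratio $\rho_n^j/\rho_n^k$ has a limit in $[0,\infty]$. A finite positive limit would force $|k_n^j - k_n^k|$ to be bounded, but then at the later stage $k>j$ the sequence $q_n^{k-1}$ has already had $f_n^j$ subtracted within a bounded dilation of the same dyadic annulus, so the $L^p$-magnitude of $\widehat{P_{k_n^k}D_\sigma^\gamma q_n^{k-1}}$ at that scale is already reduced below the threshold $c\delta^{1/\theta}(\rho_n^k)^\beta$ that would have triggered the extraction of $f_n^k$---a contradiction. Thus each pair of scales is asymptotically orthogonal. Combining the exact Pythagorean identity with $\|f_n^j\|^2 \gtrsim c(\delta)^2$ and $\|u_n\|^2 \le 1$ then forces termination after at most $N(\delta) \le c(\delta)^{-2}$ steps, yielding property (3).

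The main obstacle is the combination of the pointwise amplitude truncation (required by property (1)) with the uniform lower bound on $\|f_n^N\|_{L^2_\rho H^\gamma_\sigma}$: the standard weak-compactness extraction of Keraani--B\'egout--Vargas would produce a weak $L^2$-limit rather than a pointwise-bounded function, so here a more quantitative Chebyshev--H\"older accounting is needed. A secondary but nontrivial point is the ``already-subtracted peak'' decoupling argument for asymptotic orthogonality of scales, which must be carried out carefully since distinct profiles at comparable scales could in principle be extracted from the same dyadic shell.
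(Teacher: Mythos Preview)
Your Chebyshev--H\"older extraction and the Pythagorean termination argument are essentially the same as the paper's, but your argument for property~(2) has a real gap. You claim that if $\rho_n^j/\rho_n^k$ stayed bounded then the earlier subtraction of $f_n^j$ would already have depleted the $L^p$ mass near scale $\rho_n^k$, preventing a later extraction there. This is false: your subtraction removes only the low-amplitude part of a \emph{single} Littlewood--Paley shell $P_{k_n^j}$, and leaves every shell at bounded but nonzero distance (say $k_n^k=k_n^j\pm 3$) completely untouched. Concretely, take $u_n$ independent of $n$ with $\widehat{D_\sigma^\gamma u_n}$ equal to $c\,2^{-kd/2}$ on each annulus $\{2^{k-1}<|\xi|<2^k\}$ for $k=1,\dots,M$; then every shell carries the same value of $2^{kd(1/2-1/p)}\|\widehat{P_k\,\cdot}\|_p$, your procedure extracts them one by one, and all resulting scales $\rho_n^j$ are fixed constants with bounded pairwise ratios. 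No asymptotic orthogonality appears.

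The paper does not attempt to force orthogonality at the extraction stage. It first runs your greedy algorithm to produce pieces $w_n^1,\dots,w_n^{N'}$ (comparable scales allowed), and only \emph{afterwards} partitions the index set $\{1,\dots,N'\}$ into equivalence classes under the relation ``$\limsup_n(\rho_n^j/\rho_n^k+\rho_n^k/\rho_n^j)<\infty$'', summing all $w_n^j$ in each class into a single $f_n^\ell$. Orthogonality between distinct classes then holds by definition, and property~(1) survives because the regrouped $\widehat{D_\sigma^\gamma f_n^\ell}$ is supported in a bounded union of dilated annuli, yielding a compact $K=K(N)$ that is genuinely larger than a single shell. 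Your termination bound $N'\le c(\delta)^{-2}$ is unaffected, so this regrouping costs nothing. You should replace your contradiction argument for~(2) by this post-processing step.
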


\begin{proof} The argument here is similar to the one for the radial
case \cite{chkl2}. To begin with, let us set \[v_n :=
D_\sigma^{\ga}u_n.\] We may assume that $\|U(\cdot ) v_n
\|_{L^q_tL^r_\rho H^{\hga}_\sigma}
> \delta$ for all $n \ge 1$, otherwise there is nothing to prove.
For $\rho_n^1>0$ let us set $A^1_n = \{\xi : \frac{\rho^1_n}2 <
|\xi| < \rho^1_n \}$ and  $\widehat{v_n^1} =
\widehat{v_n}\chi_{A_n^1}.$ By the refined Strichartz estimates
(Proposition \ref{ref-str-prop}), there exists $\rho_n^1$ such that
\begin{align}\label{lb}
c_1(\rho^1_n)^{d(\frac 1p - \frac 12)}\delta^{\frac 1\theta} \leq
\|\widehat{v_n^1}\|_p,
\end{align}
for some positive constant $c_1$, $p \in (1, 2)$, $\theta \in (0,1)$
as stated in Proposition \ref{ref-str-prop}. And since
$\|\widehat{v^1_n}\|_{L^2} \leq \|\widehat{v_n}\|_{L^2} \leq 1$, $
\int_{\{|\widehat{v_n^1}| > \lambda \}} |\widehat{v_n^1}|^p d\xi =
\int_{\{|\widehat{v_n^1}| > \lambda\}}
(\lambda^{2-p}|\widehat{v_n^1}|^p)\lambda^{p-2}d\xi \le
\lambda^{p-2} $  for any $\lambda
> 0$.  Thus we have
$$
\Big(\int_{\{|\widehat{v_n^1}| > \lambda\}} |\widehat{v_n^1}|^p d\xi
\Big)^{\frac 1p} \le \lambda^{1-\frac 2p}.
$$
Now let us set $\lambda = (c_1/ 2^{\frac1p})^{\frac p{2-p}}(\rho^1_n)^{-\frac d2}\delta^{\frac 1\theta \cdot \frac p{p-2}}$. Then \eqref{lb} gives
\begin{align*}
\frac{c_1}2(\rho^1_n)^{d(\frac 1p - \frac 12)}(\min (\rho^1_n,(\rho^1_n)^{-1}))^\ep\delta^{\frac 1\theta}
&\leq \Big(\int_{\{ |\widehat{v^1_n}| < \lambda\}}
|\widehat{v^1_n}|^p\Big)^{\frac 1p} \le |A_n^1|^\frac{2-p}{2p}\Big(\int_{\{|\widehat{v^1_n}| < \lambda\}}
|\widehat{v^1_n}|^2\Big)^{\frac 12}\\
 &\lesssim (\omega_d^\frac1d\rho^1_n)^{d(\frac 1p - \frac12)}\Big(\int_{\{|\widehat{v^1_n}| < \lambda\}}|\widehat{v^1_n}|^2\Big)^{\frac 12},
\end{align*}
where $\omega_d$ is the measure of unit sphere. This implies
$$
\frac {c_1'}2\delta^{\frac 1\theta} \leq
\Big(\int_{\{|\widehat{v^1_n}| < \lambda\}}
|\widehat{v^1_n}|^2\Big)^{\frac 12}\qquad (c_1' =
c_1\omega_d^{1/2-1/p}).
$$

Now define $G_n^1(\psi)(\xi)$ by $(\rho_n^1)^{d/2}
\psi(\rho_n^1\xi)$ for measurable function $\psi$. Then by letting
$\widehat{w^1_n} = \widehat{v^1_n}\; \chi_{\{|\widehat{v^1_n}| <
\lambda\}}$ we get $\|w^1_n\|_2 \geq \frac 12 c_1'\delta^{\frac
1\theta}$ and $|G_n^1(\widehat{w^1_n}(\xi))| = |(\rho^1_n)^{\frac d2}
\widehat{w^1_n}(\rho^1_n \xi)| \leq C_\delta \chi_{A_{1/2, 1}}(\xi)$.
Here $A_{R_1, R_2}$ is the annulus $\{\xi : R_1 < |\xi| < R_2\}$.
We can repeat the above process with $v_n - w^1_n$ replacing $v_n$.
After $N' ( = \widetilde N'(\delta))$ steps, we get
$(w^j_n)_{1 \leq j \leq N'}$ and $(\rho^j_n)$ such that
\begin{align}\label{decomp}
v_n = \sum_{j=1}^{N'} w^j_n + \widetilde{q^{N'}_n}, \;\;\|v_n\|^2_2 = \sum_{j=1}^{N'} \|w^j_n\|^2_2 + \|\widetilde{q^{N'}_n}\|^2_2,\;\;
\|U(t)\widetilde{q^{N'}_n}\|_{L^q_tL^r_\rho H^\hga_\sigma} \leq \delta.
\end{align}
The second identity follows from disjointness of Fourier supports of
$w_n^j$ and $\widetilde{q_n^{N'}}$. In the sequel, we say
$\rho^j_n$ is orthogonal to $\rho^k_n$ if $\limsup_{n\to\infty}
(\frac {\rho^k_n}{\rho^j_n} + \frac {\rho^j_n}{\rho^k_n}) = \infty$.
Define $\widetilde{f^1_n}$ to be the sum of those $w^j_n$ whose
$\rho^j_n$ are not orthogonal to $\rho^1_n$. Take least $j_0 \in
[2,N']$ such that $\rho^{j_0}_n$ is orthogonal to $\rho^1_n$ and
define $\widetilde{f^2_n}$ to be the sum of $w^j_n$ whose $\rho^j_n$
are orthogonal to $\rho_n^1$ but not to $\rho_n^{j_0}$. After $N$
step for $N \le N'$, we have $(\widetilde{f^j_n})_{1 \leq j \leq
N}$. Let us set \[f_n^j = D_\sigma^{-\gamma}\widetilde{f_n^j},\quad
 q_n^{N} = D_\sigma^{-\gamma}\widetilde{q_n^{N'}}.\] Then $\{f_n^j\}$
satisfy the properties 2) and 4) of Proposition \ref{prop-decom1}
because the disjointness of the Fourier supports are disjoint and
the third inequality of \eqref{decomp} gives Property 3).

Now it remains to check Property 1). We only consider $f_n^1$,
as other cases can be treated similarly. Since $w^j_n$
collected in $\widetilde{f^1_n}$ has $\rho^j_n$ which is not
orthogonal to $\rho^1_n$, we have
\begin{align}\label{non-ortho}
\limsup_{n \rightarrow \infty} \left(\frac {\rho^j_n}{\rho^1_n} +
\frac {\rho^1_n}{\rho^k_n}\right) < \infty. \end{align} Moreover, by
construction, we have $|G^j_n(\widehat{w^j_n})| \leq
C_\delta \chi_{A_{1/2,1}}$. Here $G^j_n(\psi)(\xi) = (\rho_n^j)^\frac d2 \psi(\rho_n^j \xi)$. From
$G^1_n(\widehat{w^j_n}) = G^1_n(G^j_n)^{-1}G^j_n(\widehat{w^j_n})$
and $G^1_n(G^j_n)^{-1}\psi(\xi) = \left(\frac
{\rho^1_n}{\rho^j_n}\right)^{\frac d2}\psi\left(\frac
{\rho^1_n}{\rho^j_n} \xi\right)$, and the non-orthogonality
\eqref{non-ortho}, there exist $R_1$ and $R_2$ with
$0 < R_1 < R_2$ such that $|G^1_n(\widehat{w^j_n})| \leq
\widetilde{C_\delta} \chi_{A_{R_1, R_2}}$ for all $w_n^j$ collected
in $\widetilde{f_n^1}$.
This completes the proof of Proposition
\ref{prop-decom1}.
\end{proof}

In the next step, we further decompose $\{f_n^j\}$ into time
translated profiles.
\begin{prop}\label{further-decomp}
Suppose that $(f_n) \subset L^2_\rho H^\gamma_\sigma$ for some
$\ga \ge 0$ with $(\rho_n)^{d/2}|\widehat{D^\ga_\sigma
f_n}(\rho_n\xi)| \leq \widehat{F}(\xi)$, where $\widehat{F} \in
L^\infty(K)$ for some compact set $K \subset A = \{\xi : 0 < R_1 <
|\xi| < R_2\}$. Then there exist a family $(s^\ell_n)_{\ell \geq 1}
\subset \mathbb{R}$ and a sequence $(\phi^\ell)_{\ell \geq 1}
\subset L^2_\rho H^\gamma_\sigma$ satisfying the following:
\begin{enumerate}
\item[$1)$] For $\ell \neq \ell'$
$$\limsup_{n \rightarrow \infty} |s^\ell_n - s^{\ell'}_n| = \infty.$$
\item[$2)$]
For every  $M \geq 1$, there exists $(e^M_n) \subset L^2_\rho
H^\ga_\sigma$ such that $$f_n(x) = \sum_{\ell=1}^M
(\rho_n)^{d/2}(U(s^\ell_n) \phi^\ell)(\rho_n x) + e^M_n(x), \qquad
\limsup_{\substack{ M \to \infty \\ n\to \infty  }}  \|U(\cdot)
e^M_n\|_{L^q_tL^r_\rho H^{\ga + \hga}_\sigma} = 0.$$
\item[$3)$]
For any $M \geq 1$,
$$\limsup_{n \rightarrow \infty}\left(\|f_n\|_{L_\rho^2H_\sigma^\ga}^2 - (\sum_{\ell=1}^M
\|\phi^\ell\|_{L_\rho^2H_\sigma^\ga}^2 + \|e^M_n\|_{L_\rho^2H_\sigma^\ga}^2)\right) = 0.$$
\end{enumerate}
\end{prop}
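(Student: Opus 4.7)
The plan is to mimic the radial-case strategy of \cite{chkl2}, reducing matters to a time-translation extraction lemma and iterating. First, I normalize by rescaling: setting $g_n(x) := \rho_n^{-d/2} f_n(\rho_n^{-1} x)$ gives an $L^2_\rho H^\ga_\sigma$-isometry under which the hypothesis becomes $|\widehat{D^\ga_\sigma g_n}(\xi)| \le \widehat F(\xi)$ with $\widehat F$ supported in the compact set $K \subset \{R_1 < |\xi| < R_2\}$, and the Strichartz norms of $U(\cdot)g_n$ are related to those of $U(\cdot)f_n$ by a time dilation. Thus it suffices to construct sequences $(s_n^\ell)$, $(\phi^\ell)$ and a remainder $\widetilde e_n^M$ with $g_n = \sum_{\ell \le M} U(s_n^\ell)\phi^\ell + \widetilde e_n^M$ satisfying the three claimed properties; the final decomposition for $f_n$ follows by undoing the scaling in each summand.

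The heart of the proof is the following extraction lemma: if $v_n \in L^2_\rho H^\ga_\sigma$ is bounded with $\widehat{D^\ga_\sigma v_n}$ supported in $K$ and $\delta := \limsup_n \|U(\cdot) v_n\|_{L^q_t L^r_\rho H^{\ga+\hga}_\sigma} > 0$, then along a subsequence there exist $s_n \in \mathbb R$ and $\phi \in L^2_\rho H^\ga_\sigma$ with $U(-s_n) v_n \rightharpoonup \phi$ weakly in $L^2_\rho H^\ga_\sigma$ and $\|\phi\|_{L^2_\rho H^\ga_\sigma} \gtrsim \delta^c$ for some $c = c(K) > 0$. The compact Fourier support yields a uniform bound $\|D_\sigma^{\ga+\hga} U(t) v_n\|_{L^\infty_x L^2_\sigma} \lesssim_K \|v_n\|_{L^2_\rho H^\ga_\sigma}$ via Cauchy--Schwarz applied to the spherical harmonic expansion. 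Interpolating this $L^\infty$ bound against the given $L^q_t L^r_\rho$ lower bound produces a point $(s_n, x_n)$ at which $|D^{\ga+\hga}_\sigma U(s_n) v_n(x_n)|$ is bounded below by a power of $\delta$. The crucial spatial tightness comes from the angular regularity itself: for $w$ with $\widehat w$ supported in $K$, spatial translation by $y$ introduces an oscillatory factor $e^{-iy\cdot\xi}$ on the Fourier side, so $\|D_\sigma^\ga (w(\cdot-y))\|_2 \gtrsim \langle y\rangle^\ga \|w\|_2$. Hence the uniform $L^2_\rho H^\ga_\sigma$ bound forces $|x_n|$ to stay bounded, and a Rellich-type compactness argument extracts a subsequence of $U(-s_n) v_n$ converging weakly in $L^2_\rho H^\ga_\sigma$ (strongly in $L^2_{\mathrm{loc}}$) to a limit $\phi$ with $\|\phi\|_{L^2_\rho H^\ga_\sigma} \gtrsim \delta^c$.

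The iteration is then standard. Having extracted $(s_n^1,\phi^1)$, set $e_n^1 := g_n - U(s_n^1)\phi^1$; its $D^\ga_\sigma$-Fourier transform is still supported in $K$, and weak convergence gives $\|e_n^1\|^2_{L^2_\rho H^\ga_\sigma} = \|g_n\|^2_{L^2_\rho H^\ga_\sigma} - \|\phi^1\|^2_{L^2_\rho H^\ga_\sigma} + o(1)$. Iterating yields $(s_n^\ell,\phi^\ell)_{\ell \ge 1}$ with $\sum_\ell \|\phi^\ell\|^2_{L^2_\rho H^\ga_\sigma} \le C$; since each new profile contributes at least $\delta_M^{2c}$, where $\delta_M := \limsup_n \|U(\cdot) e_n^M\|_{L^q_tL^r_\rho H^{\ga+\hga}_\sigma}$, we obtain $\delta_M \to 0$ as $M\to\infty$, proving Property 2. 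For Property 1, if a subsequence of $|s_n^\ell - s_n^{\ell'}|$ remained bounded for some $\ell < \ell'$, converging to $s_\infty$, then $U(s_n^\ell - s_n^{\ell'})\phi^\ell \rightharpoonup U(-s_\infty)\phi^\ell \neq 0$, contradicting the extraction $U(-s_n^{\ell'}) e_n^{\ell'-1} \rightharpoonup \phi^{\ell'}$, which should already have subtracted out this contribution. Property 3 follows by expanding $\|e_n^M\|^2 = \|g_n - \sum_\ell U(s_n^\ell)\phi^\ell\|^2$, with the cross terms $\langle U(s_n^\ell)\phi^\ell, U(s_n^{\ell'})\phi^{\ell'}\rangle$ vanishing in the limit by weak convergence and asymptotic orthogonality.

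The main obstacle is the extraction lemma, specifically the conversion of a positive Strichartz lower bound into pointwise concentration together with the spatial tightness coming from the $H^\ga_\sigma$ regularity; once this is in hand, the iteration and Pythagorean bookkeeping are routine.
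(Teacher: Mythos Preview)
Your iterative scheme (rescale, extract weak limits after time translation, Pythagorean bookkeeping) matches the paper's, and the final two paragraphs of your proposal are essentially correct. The extraction lemma, however, has two real gaps.

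First, the interpolation runs the wrong way: an $L^\infty$ \emph{upper} bound together with an $L^q_tL^r_\rho$ \emph{lower} bound does not produce pointwise concentration; to force $L^\infty$ large you need an upper bound at a \emph{smaller} exponent, so that $\|\cdot\|_{L^qL^r}\le\|\cdot\|_{\text{small}}^{1-\theta}\|\cdot\|_{L^\infty}^\theta$. The paper does exactly this via a three-factor interpolation, bounding $\|U(\cdot)F_n^M\|_{L^q_tL^r_\rho H^{\ga+\hga}_\sigma}$ by norms at some $(\tilde q,\tilde r)$ and at $(4,4)$ (both controlled, via Lemma~\ref{str-radial} and Lemma~\ref{44-str} together with the compact Fourier support, by constants depending only on $R_1,R_2$) times a positive power of $\|U(\cdot)F_n^M\|_{L^\infty_tL^\infty_\rho H^\ga_\sigma}$. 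Second, and more seriously, your spatial-tightness mechanism is the wrong idea. The inequality $\|D_\sigma^\ga(w(\cdot-y))\|_2\gtrsim\langle y\rangle^\ga\|w\|_2$ concerns translates of a fixed $w$; nothing tells you $U(s_n)v_n$ is a translate, so this does not constrain your $|x_n|$, and in any case the proposition allows $\gamma=0$, where the inequality is empty. The paper never invokes angular regularity for tightness. Instead it works with the norm $L^\infty_t L^\infty_\rho H^\ga_\sigma$: it selects a radius $\rho_n$ near the supremum of $\rho\mapsto\|U(s_n)F_n(\rho\,\cdot)\|_{H^\ga_\sigma}$, uses the pointwise hypothesis $|\widehat{D^\ga_\sigma F_n}|\le\widehat F\in L^\infty(K)$ to make this map uniformly Lipschitz in $\rho$, and then the bound $\int_0^\infty \rho^{d-1}\|F_n(\rho\,\cdot)\|_{H^\ga_\sigma}^2\,d\rho\le C$ forbids a large value persisting on a $\rho$-interval at large radii, forcing $\rho_n$ bounded. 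The angular variable lives on the compact sphere and needs no separate tightness. The resulting pointwise value $D^\ga_\sigma U(s_n)F_n(\rho_0\sigma_0)$ is then written as $\langle U(-s_n)F_n,\psi\rangle_{L^2_\rho H^\ga_\sigma}$ against a fixed test function $\psi$ built from $K$, which directly bounds $\|U(\cdot)F_n^M\|_{L^\infty_tL^\infty_\rho H^\ga_\sigma}$ by the maximal weak-limit norm; no Rellich-type argument is needed.
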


\begin{proof}
Denote by $\mathcal F$ the collection of functions $ (F_n)_{n
\geq 1}$ which are given by $\widehat{F_n}(\xi) =
(\rho_n)^{d/2}\widehat{f_n}(\rho_n \xi)$, and define
$$\mathcal{W}(\mathcal F) = \{\text{\small weak-lim}\
U(-s^1_n)F_n(x) \text{ in } L^2_\rho H^\gamma_\sigma : s^1_n \in
\mathbb{R} \}, \qquad \mu(\mathcal F) = \sup_{\phi \in
\mathcal{W}(\mathcal F)} \|\phi\|_{L^2_\rho H^\gamma_\sigma}.$$
Here by the weak limit $\psi$ of $\psi_n$ in $L^2_\rho
H^\gamma_\sigma$ we mean that $\langle \psi_n - \psi,
\phi\rangle_{L_\rho^2H_\sigma^\ga} \equiv \langle D_\sigma^\gamma
(\psi_n-\psi), D_\sigma^\ga \phi\rangle_{L^2} \rightarrow 0$ as $n \rightarrow \infty$ for any $\phi \in
L_\rho^2H_\sigma^\ga$. Then $\mu(\mathcal F) \leq \limsup_{n
\rightarrow \infty} \|F_n\|_{L^2_\rho H^\gamma_\sigma}$.

{We may assume that $\mu(\mathcal F) > 0$, otherwise we are done by
using by a forthcoming inequality \eqref{bound-mu}.} Let us choose subsequences
$(F_n)_{n=1}^\infty$, $(s^1_n)$ and $\phi^1$ such that
$U(-s^1_n)F_n(x) \rightharpoonup \phi^1$ as $n \rightarrow \infty$
and $\|\phi^1\|_{L^2_\rho H^\gamma_\sigma} \geq \frac 12\mu(\mathcal
F)$. Let $F^1_n = F_n - U(s^1_n)\phi^1(x)$ and $\mathcal F^1 =
\{F^1_n\}_{n=1}^\infty$. Then
\begin{align*}
&\limsup_{n \rightarrow \infty} \|F^1_n\|^2_{L^2_\rho
H^\gamma_\sigma}= \limsup_{n \rightarrow \infty} \langle F_n - U(s^1_n)\phi^1(x),F_n - U(s^1_n)\phi^1(x)\rangle_{L^2_\rho H^\gamma_\sigma}\\
&= \limsup_{n \rightarrow \infty} \langle U(-s^1_n)F_n(x) - \phi^1,U(-s^1_n)F_n(x) - \phi^1\rangle_{L^2_\rho H^\gamma_\sigma}\\
&= \limsup_{n \rightarrow \infty} \left(\langle F_n, F_n \rangle_{L^2_\rho H^\gamma_\sigma} - \langle U(-s^1_n)F_n(x), \phi^1\rangle_{L^2_\rho H^\gamma_\sigma} - \langle \phi^1,U(-s^1_n)F_n(x) \rangle_{L^2_\rho H^\gamma_\sigma} + \langle \phi^1,\phi^1 \rangle_{L^2_\rho H^\gamma_\sigma}\right)\\
&= \limsup_{n \rightarrow \infty} \|F_n\|^2_{L^2_\rho H^\gamma_\sigma} - \|\phi^1\|^2_{L^2_\rho H^\gamma_\sigma}.
\end{align*}
Repeat the process with $F^1_n$ to get $s_n^2,\phi^2,F^2_n$ and so on.
By taking a diagonal sequence we may write
$$F_n(x) = \sum_{\ell=1}^M U(s_n^\ell)\phi^\ell(x) + F^M_n,$$
and we have
$$ \limsup_{n \rightarrow \infty} \|F_n\|_{L^2_\rho H^\gamma_\sigma}^2 = \sum_{\ell=1}^M \|\phi^\ell\|_{L^2_\rho H^\gamma_\sigma}^2 + \limsup_{n \rightarrow \infty} \|F_n^M\|_{L^2_\rho H^\gamma_\sigma}^2.$$
Thus $\sum_{\ell=1}^M \|\phi^\ell\|_{L^2_\rho H^\gamma_\sigma}^2$
converges. Hence we obtain $\limsup_{\ell \rightarrow
\infty}\|\phi^\ell\|_{L^2_\rho H^\gamma_\sigma} = 0$ and by
$\mu(\mathcal F^M) \leq 2 \|\phi^{M+1}\|_{L^2_\rho H^\gamma_\sigma}$
we get $\limsup_{M\rightarrow\infty} \mu(\mathcal F^M) = 0.$

Now we define $e_n^M$ by setting \[\rho_n^\frac d2
\widehat{e_n^M}(\rho_n\xi)=\widehat F_n^M .\] Then we are left
to show
\begin{align}\label{bound-mu}
\limsup_{n \rightarrow \infty} \|U(\cdot) e^M_n \|_{L^q_tL^r_\rho
H^{\ga + \hga}_\sigma} \lesssim \mu(\mathcal F^M)^\theta
\end{align}
for some $\theta$ with $0 < \theta < 1$.
By construction, we may assume that $\widehat{D^\ga_\sigma\phi^\ell}_{1\leq \ell \leq M}$ has common compact support $K$.
Invoking that the pair $(q, r)$ is $\alpha$-admissible, we get
\begin{align*}
&\|U(\cdot)e^M_n\|_{L^q_tL^r_\rho H^{\ga + \hga}_\sigma} 
\leq (\|U(\cdot)F^M_n\|_{L_t^{\widetilde q}L_\rho^{\widetilde r}H^{\ga+\hga}_\sigma})^{\frac {\widetilde q(6 - q)}{q(6 - \widetilde q)}} (\|U(\cdot)F^M_n\|_{L^6_tL^6_\rho H^{\ga + \frac {d-2^+}6}_\sigma})^{1-\frac {\widetilde q(6 - q)}{q(6 - \widetilde q)}}\\
\leq &(\|U(\cdot)F^M_n\|_{L_t^{\widetilde q}L_\rho^{\widetilde
r}H^{\ga+\hga}_\sigma})^{\frac {\widetilde q(6 - q)}{q(6 -
\widetilde q)}} (\|U(\cdot)F^M_n\|_{L^4_tL^4_\rho H^{\ga + \frac
{d-2^+}4}_\sigma})^{(1-\frac {\widetilde q(6 - q)}{q(6 - \widetilde
q)})\frac 23}\cdot(\|U(\cdot)F^M_n\|_{L^\infty_tL^\infty_\rho
H^{\ga}_\sigma})^{(1-\frac {\widetilde q(6 - q)}{q(6 - \widetilde
q)})\frac 13}
\end{align*}
for some $(\widetilde q, \widetilde r)$ satisfying $\frac n2 - \frac
{n}{\widetilde r} - \frac {\alpha}{\widetilde q} > 0$, $\frac
{1}{\widetilde q} \leq (d-1)(\frac 12 - \frac {1}{\widetilde r})$
and $(\frac 1q, \frac 1r) = \widetilde \theta(\frac {1}{\widetilde
q}, \frac {1}{\widetilde r}) + (1 - \widetilde \theta)(\frac
16,\frac 16)$ for some $0 < \widetilde \theta <1$. Concerning the
first factor, from Lemma \ref{str-radial} we have
$$\|U(\cdot)F^M_n \|_{L_t^{\widetilde q}L_\rho^{\widetilde r}H_\sigma^{\ga + \hga}} \lesssim  R_1^{\frac d2 - \frac{2d}{\widetilde q (2d-a)} - \frac{d}{\widetilde r}} \|F_n^M\|_{H^{\ga}_\sigma} \lesssim R_1^{\frac d2 - \frac{2d}{\widetilde q (2d-a)} - \frac{d}{\widetilde r}},$$
and Lemma \ref{44-str} gives
\[\|U(\cdot)F^M_n\|_{L^4_tL^4_\rho H^{\ga + \frac {d-2^+}{4}}_\sigma} \lesssim R_2^{\beta(\alpha,4,4)} \|F^M_n\|_{H^\ga_\sigma} \lesssim R_2^{\beta(\alpha,4,4)}.\]
Thus for \eqref{bound-mu} it suffices to show $\limsup_{n
\rightarrow \infty} \|U(t)F_n^M\|_{L^\infty_t L^\infty_\rho H^{\ga}_\sigma} \lesssim
\mu(\mathcal F^M)$. For this we may assume that there exists
$\delta
> 0$ such that $$\limsup_{\substack{ M \to \infty \\ n\to \infty  } }\|U(t)F^M_n\|_{L^\infty_t L^\infty_\rho H^{\ga}_\sigma} >
\delta.$$ Let $(s^M_n, \rho^M_n)$ be a pair such that  such that
$\frac12\|U(t) F^M_n\|_{L^\infty_t L^\infty_\rho H^{\ga}_\sigma} \le
\|U(s^M_n)(F^M_n)(\rho^M_n\cdot)\|_{H^{\ga}_\sigma}$. Then we show
that $(\rho^M_n)$ is uniformly bounded.
Let us first observe that for any $\rho_1, \rho_2 \in \mathbb R$
\begin{align*}
 &\big|\|D^\ga_\sigma U(t)(F^M_n)(\rho_2\cdot)\|_{L^2_\sigma} - \|D^\ga_\sigma U(t)(F^M_n)(\rho_1\cdot)\|_{L^2_\sigma}\big|\\
 &\leq \|D^\ga_\sigma U(t)F^M_n(\rho_2\cdot) - D^\ga_\sigma U(t)F^M_n(\rho_1\cdot)\|_{L^2_\sigma} \lesssim \|D^\ga_\sigma U(t)F^M_n(\rho_2\cdot) - D^\ga_\sigma U(t)F^M_n(\rho_1\cdot)\|_{L^\infty_\sigma} \\
 &\lesssim \sup_{x}|\nabla(D^\ga_\sigma U(t)F^M_n(x))||\rho_2 - \rho_1| \lesssim \int |\xi||e^{it|\xi|^\alpha}\widehat{D^\ga_\sigma F^M_n}(\xi)|d\xi|\rho_2 - \rho_1| \lesssim |\rho_2 - \rho_1|.
\end{align*}
 From this, we deduce that $\|U(s^M_n) F^M_n(\rho)\|_{H^{\ga}_\sigma} > \frac \delta2$ if $|\rho-\rho^M_n|
 \leq c\frac \delta4$ for some small constant $c > 0$. Taking $L_\rho^2$-norm on the set $\{|\rho^M_n| -
 c\frac \delta2 < |\rho| < |\rho^M_n| + c\frac \delta2\}$,
 we have $\frac \delta2 |\rho^M_n|^{n-1}  \frac \delta2 c \leq \|F^M_n\|_2 \leq 1$,
 which implies the uniform boundedness of $(\rho^M_n)$.

 Since $(\rho^M_n)$ is uniformly bounded, there exists $\rho^M_0$ such that
 $\rho^M_n \rightarrow \rho^M_0$ as $n \rightarrow \infty$, after taking a subsequence if necessary.
 Then for large $n$, we have $$\|U(s^M_n)(F^M_n)(\rho^M_0)\|_{H^{\ga}_\sigma}
 \geq \frac 12 \|U(s^M_n)(F^M_n)(\rho^M_n)\|_{H^{\ga}_\sigma}.$$ Let us choose  $\sigma^M_n \in S^{d-1}$
 such that $|D^{\ga}_\sigma U(s^M_n)(F^M_n)(\rho^M_0 \sigma^M_n)|\ge
 \frac34
 \|D^{\ga}_\sigma U(s^M_n)(F^M_n)(\rho^M_0 \cdot)\|_{L^\infty_\sigma}$. Since $S^{d-1}$ is compact,
 $\sigma^M_n \rightarrow \sigma^M_0$ as $n \rightarrow \infty$ for some $\sigma_0^M \in S^{d-1}$. Then for large $n$, we have
 $|D^\ga_\sigma U(s^M_n)(F^M_n)(\rho^M_0 \sigma^M_0)|  \geq \frac 12 |D^\ga_\sigma U(s^M_n)(F^M_n)(\rho^M_0 \sigma^M_n)|$. Set $\psi \in  C^\infty_0(\mathbb{R}^d)$ be such that $\psi
 = 1$ on $K$ and $\psi^M$ be a Schwartz function such that $\widehat{D^\ga_\sigma \psi^M} =
 \psi \widehat{\delta_{0}}$, where $\delta_{0}$ is Dirac-delta measure. Then we
 have
 \begin{align*}
 &\limsup_{n \rightarrow \infty} \|U(t) F^M_n\|_{L^\infty_t L^\infty_\rho H^\ga_\sigma} \lesssim \limsup_{n \rightarrow \infty}
 |D^\ga_\sigma U(s^M_n)(F^M_n)(\rho^M_0 \sigma^M_0)|\\
 &\lesssim   \limsup_{n \rightarrow \infty} |\langle U(s^M_n)(F^M_n)(y), \psi^M\rangle_{L_\rho^2H_\sigma^\ga}|
 \leq \mu(\mathcal F^M)  \|\psi^M\|_{L^2_\rho H^\ga_\sigma}\lesssim \mu(\mathcal F^M).
 \end{align*}
This completes the proof of Proposition \ref{further-decomp}.
\end{proof}

Now we are ready to prove Theorem \ref{main}.
\subsection{Proof of Theorem \ref{main}}\label{s4} We begin with a
preliminary decomposition. From Propositions \ref{prop-decom1} and
\ref{further-decomp}, we have
\begin{align}\label{pre-decomp}
u_n = \sum_{j = 1}^N \sum_{\ell = 1}^{M_j}\Phi_n^{\ell, j}  + \omega_n^{N,M_1,\cdots,M_N},
\end{align}
where \begin{align*}
&\qquad\qquad\qquad\Phi_n^{\ell, j} = U(t^{\ell,j}_n)[(h^j_n)^{-d/2}\phi^{\ell,j}(\cdot/h^j_n)],\\
&(h^j_n, t^{\ell,j}_n) = ((\rho^j_n)^{-1},(\rho^j_n)^{-\alpha}s^{\ell,j}_n),\quad \omega_n^{N,M_1,\cdots,M_N} = \sum_{j=1}^N e^{j, M_j}_n + q^N_n.
\end{align*}
Then we have
\begin{enumerate}
\item the orthogonality of parameter family $(h^j_n,t^{\ell, j}_n)$ (the property (2) in Theorem
\ref{main}),
\item the asymptotic orthogonality. i.e.
\begin{align*}
\|u_n\|_{L_\rho^2H_\sigma^\ga}^2 
= \sum_{j=1}^N\sum_{\ell=1}^{M_j}\|\phi^{\ell,j}\|_{L_\rho^2H_\sigma^\ga}^2 + \|\omega^{N,M_1,\cdots,M_N}_n\|_{L_\rho^2H_\sigma^\ga}^2 + o_n(1)
\end{align*}
and $\|\omega^{N,M_1,\cdots,M_N}_n\|_{L_\rho^2H_\sigma^\ga}^2 = \sum_{j=1}^N \|e^{j,M_j}_n\|_{L_\rho^2H_\sigma^\ga}^2 + \|q^N_n\|_{L_\rho^2H_\sigma^\ga}^2 $.  
\end{enumerate}
We will show that
$U(t)\,\omega_n^{N,M_1,\cdots,M_N}$ converges to zero in a Strichartz norm, i.e.
\begin{align}\label{str-err}\limsup_{n \rightarrow \infty} \|U(t)\, \omega_n^{N,M_1,\cdots,M_N}\|_{L^q_tL^r_\rho H^{\ga + \hga}_\sigma} \rightarrow 0 \text{ as }\min\{N, M_1,\cdots, M_N\} \rightarrow \infty, \end{align}
where $(q, r)$ is an $\alpha$-admissible pair for $\frac{d}{d-1} <
\alpha < 2$. We enumerate the pair $(\ell, j)$ by an order function
$\mathfrak n$ satisfying
\begin{center}
$\mathfrak n(\ell, j) < \mathfrak n(\ell', k)$ if $\ell + j < \ell' + k$ or $\ell + j = \ell' + k$ and $j < k$.
\end{center}
After relabeling, we get
$$u_n = \sum_{1 \leq j \leq l}U(t^j_n)[(h^j_n)^{-d/2}\phi^j(\cdot/h^j_n)](x) + \omega^l_n$$
where $\omega^l_n = u_M^{N,M_1,\cdots,M_n}$ with $l = \sum_{j=1}^N M_j.$ Then the proof is completed by \eqref{str-err}.

Now let us prove \eqref{str-err}. Given $\varepsilon > 0$, we take a
positive number $\Lambda$ such that for  $N \ge \Lambda$,
$\limsup_{n \rightarrow \infty} \|U(t)\, q^N_n \|_{L^q_tL^r_\rho
H^{\ga + \hga}_\sigma} \leq \varepsilon/3.$ Then for $N \geq \Lambda$,
we can find $\Lambda_N$ such that whenever $M_j \geq \Lambda_N$,
$\limsup_{n \rightarrow \infty} \|U(t)\, e^{j,M_j}_n
\|_{L^q_tL^r_\rho H^{\ga + \hga}_\sigma} \leq \varepsilon/3N $ for
$1\leq j \leq N.$ Now we rewrite $\omega_n^{N, M_1, \cdots, M_N}$ by
$$\omega_n^{N,M_1,\cdots,M_N} = q^M_n + \sum_{1 \leq j \leq N} e^{j, M_j \vee \Lambda_N}_n + R^{N, M_1, \cdots, M_n}_n,$$
where $M_j \vee \Lambda_N$ denotes $\max \{M_j, \Lambda_N\}$ and
\begin{align*}
R^{N,M_1,\cdots,M_N}_n = \sum_{1 \leq j \leq N} (e^{j,M_j}_n -
e^{j,\Lambda_N}_n) = \sum_{\substack{ 1 \leq j \leq N\\ M_j <
\Lambda_N}}\, \sum_{M_j < \ell < \Lambda_N}\Phi_n^{\ell, j}.
\end{align*}
Then we have
$$\lim_{n \rightarrow \infty} \|U(t) \omega_n^{N,M_1, \cdots, M_N}\|_{L^q_tL^r_\rho H^{\ga + \hga}_\sigma} \leq \frac {2\varepsilon}3 + \lim_{n \rightarrow \infty} \|U(t)R^{N,M_1,\cdots,M_N}_n\|_{L^q_tL^r_\rho H^{\ga + \hga}_\sigma}.$$
In order to handle the last term, we need the following lemma.
\begin{lem}\label{ortho}
For every $N, M_1, \cdots, M_N$, we have
\begin{align}\label{orth nonlinear1}\limsup_{n \rightarrow \infty} \| \sum_{j=1}^N\sum_{\ell = 1}^{M_j}U(t)\Phi_n^{\ell, j}\|^2_{L^q_tL^r_\rho H^{\ga + \hga}_\sigma} \le \sum_{j=1}^N\sum_{\ell = 1}^{M_j}\limsup_{n \to \infty}\left\|U(t)\Phi_n^{\ell, j}\right\|^2_{L^q_tL^r_\rho H^{\ga + \hga}_\sigma}.\end{align}
\end{lem}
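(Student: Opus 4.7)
The plan is to expand the squared norm using the pointwise identity $\|\cdot\|_{L^2_\sigma}^2 = \langle\cdot,\cdot\rangle_\sigma$, isolate diagonal and cross contributions, and then show that each cross contribution vanishes in $L^{q/2}_t L^{r/2}_\rho$ by invoking the asymptotic orthogonality of the parameter families $(h_n^j, t_n^{\ell,j})$.

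Set $f_n^{\ell,j}:=U(t)\Phi_n^{\ell,j}$ and use $\|g\|_{H^{\ga+\hga}_\sigma}^2 = \|D_\sigma^{\ga+\hga}g\|_{L^2_\sigma}^2$ to expand pointwise in $(t,\rho)$:
\begin{equation*}
 \Bigl\|D_\sigma^{\ga+\hga}\sum_{(\ell,j)} f_n^{\ell,j}\Bigr\|_{L^2_\sigma}^2 = \sum_{(\ell,j)}\|D_\sigma^{\ga+\hga} f_n^{\ell,j}\|_{L^2_\sigma}^2 + \sum_{(\ell,j)\ne(\ell',k)}\langle D_\sigma^{\ga+\hga} f_n^{\ell,j},\,D_\sigma^{\ga+\hga} f_n^{\ell',k}\rangle_\sigma.
\end{equation*}
Since $q,r>2$, taking $L^{q/2}_t L^{r/2}_\rho$ and applying the triangle inequality bound the left side of \eqref{orth nonlinear1} by $\sum \|f_n^{\ell,j}\|_{L^q_t L^r_\rho H^{\ga+\hga}_\sigma}^2$ plus finitely many cross terms; the Cauchy--Schwarz inequality on $\sigma$ then reduces the lemma to showing, for each $(\ell,j)\ne(\ell',k)$,
\begin{equation*}
 \lim_{n\to\infty}\bigl\|\,\|D_\sigma^{\ga+\hga} f_n^{\ell,j}\|_{L^2_\sigma}\,\|D_\sigma^{\ga+\hga} f_n^{\ell',k}\|_{L^2_\sigma}\bigr\|_{L^{q/2}_t L^{r/2}_\rho} = 0.
\end{equation*}

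To handle this I would pass to rescaled profiles. Define $\psi^{\ell,j}(s,y):=\|D_\sigma^{\ga+\hga}[U(s)\phi^{\ell,j}](y\sigma)\|_{L^2_\sigma}$. Since $D_\sigma^{\ga+\hga}$ commutes with radial dilations and $U$ is scaling-covariant, the representation $\Phi_n^{\ell,j}=U(t_n^{\ell,j})[(h_n^j)^{-d/2}\phi^{\ell,j}(\cdot/h_n^j)]$ gives
\[
 \|D_\sigma^{\ga+\hga} f_n^{\ell,j}(t,\rho\sigma)\|_{L^2_\sigma} = (h_n^j)^{-d/2}\,\psi^{\ell,j}\Bigl(\tfrac{t+t_n^{\ell,j}}{(h_n^j)^\al},\,\tfrac{\rho}{h_n^j}\Bigr).
\]
Substituting $s=(t+t_n^{\ell,j})/(h_n^j)^\al$, $y=\rho/h_n^j$ and using $\al/q+d/r=d/2$ to compute the Jacobian, the quantity to be bounded equals
\[
 \mu_n^{d/2}\bigl\|\psi^{\ell,j}(s,y)\,\psi^{\ell',k}(\mu_n^\al s+\tau_n,\mu_n y)\bigr\|_{L^{q/2}_s L^{r/2}_y(y^{d-1}dy\,ds)},
\]
with $\mu_n:=h_n^j/h_n^k$ and $\tau_n:=(t_n^{\ell',k}-t_n^{\ell,j})/(h_n^k)^\al$. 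H\"older together with the scaling identity $\|g(\mu_n^\al\cdot,\mu_n\cdot)\|_{L^q L^r}=\mu_n^{-d/2}\|g\|_{L^q L^r}$ shows this is uniformly bounded by $\|\psi^{\ell,j}\|_{L^q L^r}\|\psi^{\ell',k}\|_{L^q L^r}$, confirming that the prefactor $\mu_n^{d/2}$ is absorbed by the anisotropic dilation of the second profile.

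To obtain vanishing, recall that each $\phi^{\ell,j}$ has compactly supported Fourier transform by Proposition \ref{prop-decom1}, hence Strichartz estimates guarantee $\psi^{\ell,j}\in L^q_s L^r_y$; approximate it within $\varepsilon$ by a continuous $\tilde\psi^{\ell,j}$ supported in $\{|s|\le A\}\times\{A^{-1}\le y\le A\}$. Then $\tilde\psi^{\ell',k}(\mu_n^\al s+\tau_n,\mu_n y)$ is supported where $|s+\tau_n/\mu_n^\al|\le A/\mu_n^\al$ and $(A\mu_n)^{-1}\le y\le A/\mu_n$. Property (2) of Theorem \ref{main} leaves two regimes: (A) $\mu_n+\mu_n^{-1}\to\infty$, in which case the $y$-supports become disjoint for $n$ large; (B) $\mu_n$ bounded above and below, in which case the asymptotic orthogonality forces $|\tau_n|/\mu_n^\al\to\infty$ and the $s$-supports separate. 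Either way the truncated product vanishes identically for $n$ large; the scaling-invariant H\"older bound of the previous paragraph absorbs the $\varepsilon$-errors, and $\varepsilon\to 0$ closes the argument. The main obstacle is regime (B): one must carefully check that $\mu_n$ bounded combined with the asymptotic orthogonality of the two parameter pairs actually forces $|\tau_n|/\mu_n^\al\to\infty$, and that the approximation errors remain controlled after the anisotropic dilation $(s,y)\mapsto(\mu_n^\al s+\tau_n,\mu_n y)$, which is an $L^q_s L^r_y$-isometry (up to a computable factor) precisely because $(q,r)$ is $\alpha$-admissible.
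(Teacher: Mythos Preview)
Your proof is correct and follows essentially the same route as the paper: reduce \eqref{orth nonlinear1} to the vanishing of each cross term in $L^{q/2}_tL^{r/2}_\rho L^1_\sigma$, rescale so that one profile sits at unit parameters, approximate by compactly supported space--time functions using density in $L^q_tL^r_\rho L^2_\sigma$, and then use the asymptotic orthogonality of $(h_n^j,t_n^{\ell,j})$ to force the supports apart. The only difference is organizational: the paper applies H\"older in $\rho$ at the outset to separate the two factors completely in the spatial variable, reducing everything to a one-dimensional (time) support-separation argument, whereas you keep the mixed $L^{q/2}_sL^{r/2}_y$ norm and split into your regimes (A) and (B), separating supports in either $y$ or $s$; both executions are valid and rely on the same $\alpha$-admissibility identity $\alpha/q+d/r=d/2$ to make the rescaling an isometry.
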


\begin{proof}[Proof of Lemma \ref{ortho}] It suffices to show that for $(j, \ell) \neq (k, \ell')$,
\begin{align}\label{orth nonlinear2}\limsup_{n \rightarrow \infty} \|D_\sigma^{\ga + \hga}U(t)\Phi_n^{\ell, j}\; D_\sigma^{\ga + \hga}U(t)\Phi_n^{\ell', k}\|_{L_t^\frac q2 L_\rho^\frac r2 L^1_\sigma} = 0.\end{align}
When $(j, \ell) \neq (k, \ell')$, there are two possibilities: 
 \begin{enumerate}
 \item $\limsup_{n \rightarrow \infty} \left(\frac {h^k_n}{h^j_n} + \frac {h^j_n}{h^k_n}\right) = \infty$,
 \item $(h^j_n)=(h^k_n)$ and $\limsup_{n\rightarrow\infty}\frac{|t^{\ell, j}_n - t^{\ell', k}_n|}{(h^j_n)^\alpha} =
 \infty$.
 \end{enumerate}
More generally, we will prove that if $D^{\ga + \hga}_\sigma \Psi_1,
D^{\ga + \hga}_\sigma \Psi_2 \in L^q_tL^r_\rho L^2_\sigma$\footnote{Note that $D_\sigma$ commutes dilation.}, then
 $$\limsup_{n \rightarrow \infty} \Big\|\frac 1{(h^j_n)^{\frac d2}} D^{\ga + \hga}_\sigma \Psi_1(\frac {t - t^{\ell,j}_n}{(h^j_n)^\al}, \frac x{h^j_n}) \frac 1{(h^k_n)^{\frac d2}} D^{\ga + \hga}_\sigma \Psi_2(\frac {t-t^{\ell',k}_n}{(h^k_n)^\al},\frac x{h^k_n})\Big\|_{L^{\frac q2}_tL^{\frac r2}_\rho L^1_\sigma} =  0.$$
 By density argument, it suffices  to show this for $D^{\ga + \hga}_\sigma \Psi_1,
D^{\ga + \hga}_\sigma \Psi_2 \in C^\infty_0(\mathbb{R} \times
\mathbb{R}^d)$. Using the
 H\"{o}lder inequality and scaling in spatial variables,
\begin{align*}
& A_n := \Big\|\frac 1{(h^j_n)^{\frac d2}} D^{\ga + \hga}_\sigma
\Psi_1(\frac {t - t^{\ell, j}_n}{(h^j_n)^\al}, \frac x{h^j_n})
\frac 1{(h^k_n)^{\frac d2}} D^{\ga + \hga}_\sigma \Psi_2(\frac {t-t^{\ell', k}_n}{(h^k_n)^\al},\frac x{h^k_n})\Big\|_{L^{\frac q2}_tL^{\frac r2}_\rho L^1_\sigma}\\
& \leq \Big\|\Big\|\frac 1{(h^j_n)^{\frac d2}} D^{\ga + \hga}_\sigma
\Psi_1(\frac {t - t^{\ell, j}_n}{(h^j_n)^\al}, \frac
x{h^j_n})\Big\|_{L^r_\rho L^2_\sigma} \Big\|\frac 1{(h^k_n)^{\frac
d2}} D^{\ga + \hga}_\sigma \Psi_2(\frac {t-t^{\ell',
k}_n}{(h^k_n)^\al},\frac x{h^k_n})\Big\|_{L^r_\rho L^2_\sigma}\Big\|_{L^{\frac q2}_t}\\
& \leq \Big\|\frac 1{(h^j_n)^{\frac d2 - \frac dr}}\Big\| D^{\ga +
\hga}_\sigma \Psi_1(\frac {t - t^{\ell, j}_n}{(h^j_n)^\al},
x)\Big\|_{L^r_\rho L^2_\sigma} \frac 1{(h^k_n)^{\frac d2 - \frac
dr}}\Big\|D^{\ga + \hga}_\sigma \Psi_2(\frac {t-t^{\ell',
k}_n}{(h^k_n)^\al}, x)\Big\|_{L^r_\rho L^2_\sigma}\Big\|_{L^{\frac
q2}_t}.
\end{align*}
Then by time translation and scaling in time, we estimate
\begin{align*}
A_n &= \Big\|\frac 1{(h^j_n)^{\frac d2 - \frac dr - \frac {2\al}q}}\| D^{\ga + \hga}_\sigma \Psi_1(t, x)\|_{L^r_\rho L^2_\sigma} \frac 1{(h^k_n)^{\frac d2 - \frac dr}}\|D^{\ga + \hga}_\sigma \Psi_2((\frac {h^j_n}{h^k_n})^\al t - \frac{t^{\ell', k}_n-t^{\ell, j}_n}{(h^k_n)^\al}, x)\|_{L^r_\rho L^2_\sigma}\Big\|_{L^{\frac q2}_t}\\
&\leq \Big\|(\frac {h^j_n}{h^k_n})^{\frac \al q}\Big\|D^{\ga +
\hga}_\sigma \Psi_1(t, x)\Big\|_{L^r_\rho L^2_\sigma} \Big\| D^{\ga
+ \hga}_\sigma \Psi_2((\frac {h^j_n}{h^k_n})^\al t - \frac{t^{\ell',
k}_n-t^{\ell, j}_n}{(h^k_n)^\al},x)\Big\|_{L^r_\rho
L^2_\sigma}\Big\|_{L^{\frac q2}_t}.
\end{align*}
Since the support in time of $\|D^{\ga + \hga}_\sigma \Psi_1(t,
\cdot)\|_{L^r_\rho L^2_\sigma}$ is compact, from the above condition
(1) or (2) it follows that $\limsup_{n \to \infty} A_n = 0$. This
completes the proof of Lemma \ref{ortho}.
\end{proof}

By Lemma \ref{ortho} and the Strichartz estimates (Lemma
\ref{str-radial}) it follows that
\begin{align*}
\limsup_{n \rightarrow
\infty}\|U(t)R_n^{N,M_1,\cdots,M_N}\|_{L^q_tL^r_\rho H^{\ga +
\hga}_\sigma}^2 &\le \sum_{\substack{1 \leq j \leq N\\
M_j < \Lambda_N}}\,\sum_{M_j < \ell < \Lambda_N} \limsup_{n
\rightarrow \infty}\|U(t)\Phi_n^{\ell, j}\|_{L^q_tL^r_\rho H^{\ga + \hga}_\sigma}^2\\
&\lesssim \sum_{1 \leq j \leq N} \sum_{\ell > M_j} \|\phi^{\ell,j}\|_{L^2_\rho H^\ga_\sigma}^2.
\end{align*}
Since $\sum_{j, \ell} \|\phi^{\ell,j}\|_{L^2_\rho H^\ga_\sigma}^2$ is convergent, we have
$$\limsup_{n \rightarrow \infty} \Big(\sum_{j=1}^N\sum_{\ell > M_j} \|U(t)\Phi_n^{\ell, j}
\|_{L^q_tL^r_\rho H^{\ga + \hga}_\sigma}^2\Big)^{\frac 12} \leq
\frac \varepsilon3,$$ provided that $\min(N, M_1, \cdots, M_N\}$ is
sufficiently large. This completes the proof of Theorem \ref{main}.
\vspace{2mm}
\section{Application: Blowup phenomena}
In this section, we present applications of linear profile
decomposition to the mass-critical Hartree equations \eqref{eqn}.
Almost all parts of the proofs of results are very similar to the radial case. So, we
omit them and refer readers to \cite{chkl2} for them.  Firstly, we  get
nonlinear profile decompositions of the solutions to \eqref{eqn}.

\subsection{\textbf{Nonlinear profile decomposition}}
Let us set
\begin{equation}\label{numbers} (q_\circ,r_\circ)=\Big(3,\frac{6d}{3d -
2\al}\Big),\quad \ga_1 = \frac {d^2 - \alpha d + \alpha}{4d},
\quad\ga_2 = \frac {d - 1 + \ga_1}3, \quad \tga = \ga_2 - \ga_1 +
\ga.
\end{equation} As it will be shown in Appendix \ref{s-gwp} by the
usual fixed point argument and the Strichartz estimate in Lemma
\ref{str-radial}, the local well-posedness theory can be based on
the estimate of space-time norm $\|u\|_{L_t^\qq L_\rho^\rr
H_\sigma^{\tga}(I \times\mathbb{R}^d)}$. For a given sequence of
angularly regular data $ (u^0_n) \subset L^2_\rho H_\sigma^\ga $,
using the linear profile decomposition (Theorem~\ref{main}), we have
sequences $(\phi^j)_{1\leq j \leq l} \in L^2_\rho H_\sigma^\ga$,
$\omega_n^l \in L^2_\rho H_\sigma^\ga$, $(h_n^j, t_n^j)_{1 \leq j
\leq l, n \geq 1}$ which satisfy (1)--(3) in Theorem~\ref{main}.
Then by taking subsequence, if necessary, we may assume that $ t^j
\in \{-\infty, 0, \infty\}$. Here we denote $t^j = \lim_n t^j_n$.
Using the local well-posedness theorem with initial data at $t=0$ or
$t=\pm \infty $ (see Proposition~\ref{wpasy} below), we define the
nonlinear profile by the maximal nonlinear solution for each linear
profile.


\begin{defn}
Let $(h_n,t_n) $ be a family of parameters and $(t_n)$ have a
limit in $[-\infty, \infty]$. Given a linear profile $\phi \in
L^2_\rho H_\sigma^\ga$ with $(h_n,t_n)$, we define the nonlinear
profile associated with them to be  the maximal solution $\psi$ to
\eqref{eqn} which is in $C_tL^2_\rho
H_\sigma^\ga((-T_{\min},T_{\max}) \times \mathbb{R}^d)$ satisfying
that
$$\lim_{n \to \infty}\|U(t_n)\phi - \psi(t_n)\|_{L^2_\rho H_\sigma^\ga} = 0.$$
Here $(-T_{\min},T_{\max})$ is the maximal existence time interval.
\end{defn}

 Then, the linear profile decomposition yields the
nonlinear profile decomposition. It is the key tool for proving
blowup phenomena in what follows.
\begin{thm}\label{nlpf}
Let $(u_n^0) \subset L^2_\rho H_\sigma^\ga$ be a bounded sequence.
Suppose that $(\phi^j)_{1\leq j \leq l}  \subset L^2_\rho
H_\sigma^\ga$, $\omega_n^l \in L^2_\rho H_\sigma^\ga$, and $(h_n^j,
t_n^j)_{1 \leq j \leq l, n \geq 1}$ are sequences obtained from
Theorem~\ref{main}. Let $u_n \in C_tL_\rho^2H_\sigma^\gamma(J_n \times \mathbb R^d)$ be
the maximal solution of \eqref{eqn} with initial data $u_n(0) =
u_n^0$. For each $j\ge 1$, suppose $(\psi^j)_{1\leq j
\leq l} \subset C_tL^2_\rho H_\sigma^\ga((-T^j_{\min},T^j_{\max})
\times \mathbb{R}^d)$ is the maximal nonlinear profile associated
with $(\phi^j)_{1\leq j \leq l}$ and $(h_n^j, t_n^j)_{1 \leq j \leq l,
n \geq 1}$. Let $(I_n)$ be a family of nondecreasing time
intervals containing $0$. Then, the following two are equivalent;
\begin{enumerate}
\item $\limsup_{n \rightarrow \infty} \|\Ga^j_n \psi^j \|_{L_t^\qq L_\rho^\rr H_\sigma^\tga(I_n \times \mathbb{R}^d)} < \infty, \quad j\ge1,$
\item $\limsup_{n \rightarrow \infty} \|u_n \|_{L_t^\qq L_\rho^\rr H_\sigma^\tga(I_n \times \mathbb{R}^d)} < \infty.$
\end{enumerate}
Here  $\Gamma^j_n\psi^j = \frac 1{(h^j_n)^{d/2}}\psi^j(\frac {t -
t^j_n}{(h^j_n)^\al},\frac x{h^j_n})$. Moreover, if $(1)$ or $(2)$
holds true, we have a decomposition
$$u_n = \sum_{j=1}^l \Ga^j_n \psi^j + U(\cdot)\omega^l_n + e^l_n$$
with $\lim_{l \to \infty}\limsup_{n \to \infty}
\big(\|\big(U(\cdot)\omega^l_n\|_{L_t^\qq L_\rho^\rr
H_\sigma^\tga(I_n \times \mathbb{R}^d)} + \|e^l_n\|_{L_t^\qq
L_\rho^\rr H_\sigma^\tga(I_n \times \mathbb{R}^d)} \big)= 0.$
\end{thm}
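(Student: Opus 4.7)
The plan is to follow the Bahouri--G\'erard--Keraani approximate-solution strategy, using Theorem \ref{main}, an asymptotic orthogonality calculation extending Lemma \ref{ortho} to the nonlinearity, and the long-time perturbation theorem for \eqref{eqn} in $L^{\qq}_t L^{\rr}_\rho H^{\tga}_\sigma$ (Proposition \ref{wpasy}). I would first prove $(1)\Rightarrow(2)$ together with the decomposition, and then $(2)\Rightarrow(1)$ by a short perturbation argument.

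\textbf{Approximate solution.} Define
$$\tilde u_n^l := \sum_{j=1}^l \Gamma_n^j\psi^j + U(\cdot)\omega_n^l.$$
By Theorem \ref{main}(3) and the defining property of the nonlinear profiles, $\|\tilde u_n^l(0) - u_n^0\|_{L^2_\rho H^\gamma_\sigma}\to 0$ as $n\to\infty$; by Theorem \ref{main}(1), $\limsup_n\|U(\cdot)\omega_n^l\|_{L^\qq_t L^\rr_\rho H^\tga_\sigma}\to 0$ as $l\to\infty$. Assuming (1), a variant of Lemma \ref{ortho} at exponents $(\qq,\rr,\tga)$ (the cross-norms of $\Gamma_n^j\psi^j$ and $\Gamma_n^k\psi^k$ for $j\neq k$ vanish as $n\to\infty$ by the asymptotic orthogonality of $(h_n^j,t_n^j)$), combined with the $L^2_\rho H^\gamma_\sigma$-orthogonality of the $\phi^j$ and the small-data theory (Proposition \ref{CP1}) for all-but-finitely-many tail profiles, yields a Strichartz bound for $\tilde u_n^l$ on $I_n\times\mathbb R^d$ which is uniform in $l$.

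\textbf{Smallness of the error.} Let
$$F_n^l := (i\partial_t+(-\Delta)^{\alpha/2})\tilde u_n^l - \lambda\bigl(|x|^{-\alpha}*|\tilde u_n^l|^2\bigr)\tilde u_n^l.$$
Since each $\psi^j$ solves \eqref{eqn} and $U(\cdot)\omega_n^l$ is a linear solution,
$$F_n^l = \lambda\sum_{j=1}^l\Gamma_n^j\bigl[(|x|^{-\alpha}*|\psi^j|^2)\psi^j\bigr] - \lambda\bigl(|x|^{-\alpha}*|\tilde u_n^l|^2\bigr)\tilde u_n^l.$$
Expanding $|\tilde u_n^l|^2$ produces diagonal cubic terms (which cancel the first sum) and off-diagonal cross terms of the shape $(|x|^{-\alpha}*(v_1\overline{v_2}))v_3$, each $v_i$ being either some $\Gamma_n^{j_i}\psi^{j_i}$ or $U(\cdot)\omega_n^l$, with not all three indices equal. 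For pure profile-profile-profile terms with distinct indices, the pairwise asymptotic orthogonality of $(h_n^j,t_n^j)$, after reduction to $C_c^\infty$ profiles and an application of the Hardy--Littlewood--Sobolev inequality plus the change-of-variables bookkeeping used in Lemma \ref{ortho}, drives the relevant dual Strichartz norm to zero as $n\to\infty$. Cross terms involving $U(\cdot)\omega_n^l$ are controlled by H\"older and Hardy--Littlewood--Sobolev against the uniform bound from the previous step and the smallness $\limsup_n\|U(\cdot)\omega_n^l\|_{L^\qq_t L^\rr_\rho H^\tga_\sigma}\to 0$ as $l\to\infty$.

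\textbf{Conclusion and converse.} By the long-time perturbation theorem (Proposition \ref{wpasy}), the three ingredients --- matched initial data, uniform Strichartz bound for $\tilde u_n^l$, and vanishing error $F_n^l$ --- imply $\|u_n-\tilde u_n^l\|_{L^\qq_t L^\rr_\rho H^\tga_\sigma(I_n\times\mathbb R^d)}\to 0$ on letting first $n\to\infty$ and then $l\to\infty$, which is (2) together with the claimed decomposition with $e_n^l := u_n-\tilde u_n^l$. For $(2)\Rightarrow(1)$, the scale and time-translation invariance of the Strichartz norm gives $\|\Gamma_n^j\psi^j\|_{L^\qq_tL^\rr_\rho H^\tga_\sigma(I_n)} = \|\psi^j\|_{L^\qq_tL^\rr_\rho H^\tga_\sigma((\Gamma_n^j)^{-1}I_n)}$; comparing $\psi^j$ with $(\Gamma_n^j)^{-1}u_n$ on the preimage interval via Proposition \ref{wpasy} (and using that $(\Gamma_n^j)^{-1}u_n$ converges at time $t_n^j/(h_n^j)^\alpha$ to $U(t^j)\phi^j$ in $L^2_\rho H^\gamma_\sigma$) closes the argument. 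The principal obstacle is the cross-term analysis: the nonlocal Riesz convolution $|x|^{-\alpha}*$ couples three profiles living at distinct scales and times, so asymptotic orthogonality must be propagated through the convolution, which requires a slightly more careful density and change-of-variables argument than in the purely multiplicative setting of Lemma \ref{ortho}.
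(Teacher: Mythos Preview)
The paper does not actually prove Theorem \ref{nlpf}: Section 4 states that the argument is ``very similar to the radial case'' and refers to \cite{chkl2}. Your outline is the standard Bahouri--G\'erard--Keraani scheme used there, so in spirit you are following exactly the intended route: build the approximate solution $\tilde u_n^l$, control cross terms in the Hartree nonlinearity via orthogonality of parameters (the extension of Lemma \ref{ortho}), and close with a stability lemma.

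There is, however, one concrete misidentification that you should fix. You repeatedly invoke Proposition \ref{wpasy} as ``the long-time perturbation theorem,'' but in this paper Proposition \ref{wpasy} is \emph{not} a perturbation/stability result: it is the construction of a solution with prescribed scattering state at $t=+\infty$ (existence of wave operators). It is used to \emph{define} the nonlinear profiles $\psi^j$ when $t^j=\pm\infty$, not to compare $u_n$ with $\tilde u_n^l$. The step ``matched data $+$ uniform Strichartz bound on $\tilde u_n^l$ $+$ small source $F_n^l$ $\Rightarrow$ $u_n$ close to $\tilde u_n^l$'' requires a genuine long-time stability lemma for \eqref{eqn} in the norm $L^{\qq}_t L^{\rr}_\rho H^{\tga}_\sigma$, which is not stated anywhere in the present paper. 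Such a lemma is straightforward to prove from the trilinear estimate already established in the proof of Proposition \ref{CP1} (the bound $\|D_\sigma^{\ga}((|x|^{-\al}*|u|^2)u)\|_{L^1_tL^2_\rho L^2_\sigma}\lesssim \|u\|_{L^{\qq}_tL^{\rr}_\rho H^{\tga}_\sigma}^3$) together with the inhomogeneous Strichartz estimate and the usual subdivision-of-the-interval argument; but you must state and prove it separately, rather than citing Proposition \ref{wpasy}. The same remark applies to your $(2)\Rightarrow(1)$ sketch: the comparison of $\psi^j$ with $(\Gamma_n^j)^{-1}u_n$ is again a stability argument, not an application of Proposition \ref{wpasy}.
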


\subsection{Applications}\label{blow-ph}
We consider blowup phenomena of solutions to \eqref{eqn}. 
If the solution fails
to persist, then the space-time norm blows up. The blowup solution
is defined as follows.

\begin{defn}
A solution $u \in C_t L^2_\rho
H_\sigma^\ga((-T_{min},T_{max})\times\mathbb{R}^d)$ to \eqref{eqn}
is said to blow up if $\|u\|_{L_t^\qq L_\rho^\rr
H_\sigma^{\tga}((-T_{min},T_{max})\times\mathbb{R}^d)} = \infty$.
Here $(-T_{min}, T_{max})\in [-\infty,\infty]$ denotes the maximal
time interval of existence of the solution.
\end{defn}
Since $ T_{max}$ or $T_{min}$ may be $\infty$, we regard
non-scattering global solutions as blowup solutions at infinite
time. We also define a minimal quantity of solutions from which a
solution may ignite to blow up.
\begin{defn}\label{minimal mass} Define
\begin{align*}  \delta_0 = \sup\{A : \text{for any } &u_0 \text{ with } \|u_0\|_{L^2_\rho
H_\sigma^\ga} \le A,\,\,\eqref{eqn} \text{ is globally well-posed on } \mathbb{R}  \\
              &\text{ satisfying }  \|u\|_{L_t^\qq L_\rho^\rr
H^\tga_\sigma(\mathbb{R}\times\mathbb{R}^d)} < \infty  \}.   \end{align*}
\end{defn}
By the small data global existence (see Section \ref{s-gwp} below),
we have $\delta_0 >0$. Moreover, for any $ \delta >\delta_0 $ there
exists a blowup solution $u$ with $\delta_0 \le \|u_0\|_{L^2_\rho
H_\sigma^\ga} \le \delta$. Such a solutions satisfies that $\|u(t)\|_{L^2_\rho
H_\sigma^\ga} \ge \delta_0$ for all $t \in (-T_{min}, T_{max})$.
As opposed to $L_x^2$-norm, $L^2_\rho H^\ga_\sigma$ is not conserved in time. We only have a lower bound from the mass conservation law. One may compare $\delta_0$ with minimal mass of radial blowup solutions \cite{chkl2}. If we set
\begin{align*}  \delta_{0,rad} = \sup\{A : \text{for any radial data } &u_0 \text{ with } \|u_0\|_{L^2_x} \le A,\,\,\eqref{eqn} \text{ is globally well-posed on } \mathbb{R}  \\
              &\text{ satisfying }  \|u\|_{L_t^\qq L_x^\rr
(\mathbb{R}\times\mathbb{R}^d)} < \infty  \},  \end{align*}
then, it is clear that $ \delta_0 \le \delta_{0,rad}$.

Following the same lines of arguments which were used for the radial case (Theorem 1.6, \cite{chkl2}), we obtain the existence of minimal blowup solutions  with initial data in ${L^2_\rho
H_\sigma^\ga}$.
\begin{thm}\label{th:minimal blow up}
Assume $ \delta_0 <\infty$. Then, there exists a blowup solution $u$ to $\eqref{eqn}$ with initial
data $u_0 \in L^2_\rho H^\ga_\sigma$ such that $\|u_0\|_{L^2_\rho
H^\ga_\sigma} = \delta_0$.
\end{thm}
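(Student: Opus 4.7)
The plan is to run the standard concentration-compactness extraction of a minimal blowup solution using Theorems \ref{main} and \ref{nlpf}. First, I would choose a minimizing sequence of initial data $(u_n^0) \subset L^2_\rho H^\ga_\sigma$ with $\|u_n^0\|_{L^2_\rho H^\ga_\sigma} \searrow \delta_0$ whose associated maximal solutions $u_n$ blow up, in the sense that
\[
\|u_n\|_{L_t^\qq L_\rho^\rr H_\sigma^\tga(I_n \times \RR^d)} = \infty,
\]
where $I_n = (-T^n_{\min}, T^n_{\max})$ is the maximal existence interval. Such a sequence exists by the definition of $\delta_0$.

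Next, I would apply the linear profile decomposition to $(u_n^0)$ to obtain profiles $\phi^j$, parameters $(h_n^j, t_n^j)$ with limits $t^j \in \{-\infty,0,\infty\}$, and remainders $\omega_n^l$ satisfying property (3) of Theorem \ref{main}; in particular
\[
\sum_{j\ge 1} \|\phi^j\|_{L^2_\rho H^\ga_\sigma}^2 + \limsup_{n\to\infty}\|\omega_n^l\|_{L^2_\rho H^\ga_\sigma}^2 \le \delta_0^2 \qquad \text{for every }l.
\]
To each $\phi^j$ I would associate its nonlinear profile $\psi^j$ (on its maximal interval) via Proposition \ref{wpasy}, and then invoke Theorem \ref{nlpf} on $I_n$. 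Since $u_n$ blows up, condition (2) of that theorem fails, so condition (1) fails: at least one nonlinear profile $\psi^{j_0}$ must itself blow up in the $L_t^\qq L_\rho^\rr H_\sigma^\tga$ norm.

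The key step is a dichotomy argument. Suppose there were two or more nontrivial profiles. Then by the almost-Pythagorean identity above, each $\|\phi^j\|_{L^2_\rho H^\ga_\sigma} < \delta_0$ strictly. By the very definition of $\delta_0$ together with Proposition \ref{wpasy} (which handles the case $t^j = \pm\infty$ by scattering), each corresponding nonlinear profile $\psi^j$ would be globally defined with finite $L_t^\qq L_\rho^\rr H_\sigma^\tga$-norm — contradicting the blowup of $\psi^{j_0}$. Hence there is exactly one nontrivial profile $\phi^1$, the remainder vanishes in $L^2_\rho H^\ga_\sigma$, and the identity forces $\|\phi^1\|_{L^2_\rho H^\ga_\sigma} = \delta_0$.

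Finally I extract the minimal blowup solution from $\psi^1$. If $t^1 = 0$ then $u_0 := \phi^1$ works directly: the associated solution is precisely $\psi^1$ (up to the scaling $\Gamma^1_n$ which preserves the norm), which blows up by construction. If $t^1 = \pm\infty$, I instead pick any $t_0$ in the maximal interval of $\psi^1$ and set $u_0 := \psi^1(t_0)$; then the lower bound $\|u(t)\|_{L^2_\rho H^\ga_\sigma} \ge \delta_0$ recorded after Definition \ref{minimal mass}, combined with $\|\psi^1(t_0)\|_{L^2_\rho H^\ga_\sigma} \le \delta_0$ coming from the asymptotic orthogonality, forces $\|u_0\|_{L^2_\rho H^\ga_\sigma} = \delta_0$. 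I expect the main obstacle to be the rigorous implementation of the dichotomy — in particular, ruling out the possibility that several profiles could "share" the mass while still producing blowup. This will rest crucially on the orthogonality property (2) of Theorem \ref{main}, via the fact (already used in the proof of Lemma \ref{ortho}) that the $L_t^\qq L_\rho^\rr H_\sigma^\tga$-norms of the $\Gamma^j_n\psi^j$ decouple in the limit.
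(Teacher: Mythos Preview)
Your concentration-compactness outline is exactly the argument the paper has in mind: the authors do not give a self-contained proof of Theorem~\ref{th:minimal blow up} but refer to the radial case in \cite{chkl2} (Theorem~1.6 there), whose proof proceeds via the profile decomposition precisely as you sketch. The selection of a minimizing sequence, the use of Theorem~\ref{nlpf} to force at least one blowup nonlinear profile, and the Pythagorean dichotomy that collapses everything to a single profile with $\|\phi^1\|_{L^2_\rho H^\ga_\sigma}=\delta_0$ are all correct and standard.

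There is, however, one step that does not go through as written. In your final paragraph, when $t^1=\pm\infty$ you assert $\|\psi^1(t_0)\|_{L^2_\rho H^\ga_\sigma}\le\delta_0$ ``coming from the asymptotic orthogonality'' for an \emph{arbitrary} $t_0$. Asymptotic orthogonality only yields $\|\phi^1\|_{L^2_\rho H^\ga_\sigma}=\delta_0$; since $\|\cdot\|_{L^2_\rho H^\ga_\sigma}$ is \emph{not} conserved by the flow (the paper stresses this immediately after Definition~\ref{minimal mass}), you cannot propagate that bound to $\psi^1(t_0)$. All you know is that $\|\psi^1(t)\|_{L^2_\rho H^\ga_\sigma}\to\delta_0$ along the sequence where $\psi^1$ matches $U(\cdot)\phi^1$, together with the lower bound $\ge\delta_0$ everywhere --- which does not produce a time at which equality holds. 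This is a genuine departure from the radial $L^2$ setting of \cite{chkl2}, where mass conservation makes the step trivial. To close the gap you must either rule out $t^1=\pm\infty$ directly (for instance, showing that the scattering built into Proposition~\ref{wpasy} forces $\limsup_n\|\Gamma_n^1\psi^1\|_{L_t^\qq L_\rho^\rr H_\sigma^\tga(I_n)}<\infty$ on the relevant half-line, contradicting blowup), or feed the new minimizing sequence $(\psi^1(s_k))_k$ back into the argument and explain why the iteration terminates.
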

\noindent Note that the minimal blowup solution obtained above may not be radial, and so $\|u(t) \|_{L^2}$ may be smaller than $\delta_0$.

In view of the local theory in Appendix A there are two possible blowup scenarios:
\begin{itemize}
\item [(1)] $\sup_{t \in (-T_{min}, T_{max})}\|u(t)\|_{L_\rho^2H_\sigma^\gamma}  = \infty$,
\item [(2)] $\sup_{t \in (-T_{min}, T_{max})}\|u(t)\|_{L_\rho^2H_\sigma^\gamma} < \infty$ and $\|u\|_{L_t^\qq L_\rho^\rr H^\tga_\sigma
((-T_{min}, T_{max})\times\mathbb{R}^d)} = \infty$.
\end{itemize}
 We  focus on  the second scenario. In the case of (2), we deduce from the linear and nonlinear profile decompositions a compactness of the trajectory of solution $u(t)$ as in the radial case. If especially $\sup_{t \in (-T_{min}, T_{max})}\|u(t)\|_{L_\rho^2H_\sigma^\gamma}^2 <
{2}\delta_0^2$, 
then the blowup solution does not form more than one blowup
profile. Thus, this gives  a weaker form of compactness property of
the blowup solutions.
\begin{thm}\label{cpbs}
Let $u$ be finite time blowup solution of \eqref{eqn} at $T^*$ with
$\sup_{0 < t < T^*}\|u(t,\cdot)\|_{L^2_\rho H^\ga_\sigma} < \sqrt{2}\delta_0$ and let
$t_n \nearrow T^*$. Then there exist $\phi \in L^2_\rho
H^\ga_\sigma$ and $(h_n)_{n=1}^\infty$ satisfying $
\label{weakcon} h^{d/2}_n u(t_n, h_nx) \rightharpoonup \phi \text {
(weakly) in } L^2_\rho H^\ga_\sigma $ and if solution of \eqref{eqn}
with initial data $\phi$ blows up at $T^{**}$, then
\begin{equation}
\label{hhh} \lim_{n \rightarrow \infty} \frac {h_n}{(T^* -
t_n)^{1/\alpha}} \leq \frac 1{(T^{**})^{1/\alpha}} \end{equation}
up to subsequence.
\end{thm}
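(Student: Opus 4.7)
The plan is to adapt the concentration-compactness blowup argument from the radial setting (Theorem 1.7 of \cite{chkl2}), driven by the linear and nonlinear profile decompositions in Theorems \ref{main} and \ref{nlpf}. Set $u_n^0 := u(t_n,\cdot)$, which is bounded in $L^2_\rho H^\ga_\sigma$ by hypothesis. Passing to a subsequence, Theorem \ref{main} supplies profiles $(\phi^j) \subset L^2_\rho H^\ga_\sigma$, parameters $(h_n^j, t_n^j)$ with $t_n^j/(h_n^j)^{\al} \to t^j \in \{-\infty, 0, +\infty\}$, and remainders $\omega_n^l$. Combining the mass orthogonality of Theorem \ref{main}(3) with the standing assumption $\sup_{0<t<T^*}\|u(t)\|_{L^2_\rho H^\ga_\sigma}^2 < 2\delta_0^2$ gives the strict inequality $\sum_j \|\phi^j\|_{L^2_\rho H^\ga_\sigma}^2 < 2\delta_0^2$.

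Next, we show that exactly one profile carries the blowup. Consider $v_n(s) := u(t_n + s)$, which solves \eqref{eqn} with datum $u_n^0$ and blows up forward at $T^* - t_n \to 0^+$. Let $\psi^j$ denote the associated nonlinear profile, whose existence for all values of $t^j$ is supplied by Proposition \ref{wpasy}. By the very definition of $\delta_0$, every $j$ with $\|\phi^j\|_{L^2_\rho H^\ga_\sigma} < \delta_0$ yields a globally existing $\psi^j$ with finite $L^\qq_t L^\rr_\rho H^\tga_\sigma$-norm. The strict bound above therefore forces \emph{at most} one profile, relabel it $j=1$, with $\|\phi^1\|_{L^2_\rho H^\ga_\sigma} \ge \delta_0$. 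Moreover, \emph{at least} one supercritical profile must exist: otherwise every $\Gamma_n^j \psi^j$ has uniformly bounded Strichartz norm, and the implication $(1)\Rightarrow(2)$ of Theorem \ref{nlpf} would yield a uniform Strichartz bound for $v_n$ on $[0, T^*-t_n)$, contradicting blowup. Hence $\phi^1$ is the unique blowup profile and $\psi^1$ has forward maximal existence $T^1_{\max} \in (0, +\infty]$.

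Set $h_n := h_n^1$. Using the intertwining identity $U(t)[h^{-d/2} f(\cdot/h)](x) = h^{-d/2}[U(t/h^{\al}) f](x/h)$, rescaling the linear profile decomposition gives
\[
h_n^{d/2}\, u(t_n, h_n x) = U\bigl(t_n^1/(h_n^1)^{\al}\bigr) \phi^1(x) + \sum_{j \ge 2}\bigl(\text{parameter-orthogonal terms}\bigr) + (\text{rescaled remainder}).
\]
For $j \ge 2$, parameter orthogonality (divergence of scale ratio or of time separation) sends each term weakly to zero in $L^2_\rho H^\ga_\sigma$; the remainder is handled by choosing $l$ large and invoking Theorem \ref{main}(1). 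When $t^1 = 0$ we obtain $h_n^{d/2} u(t_n, h_n x) \rightharpoonup \phi := \phi^1$ weakly; when $t^1 = \pm\infty$ the unitary factor $U(t_n^1/(h_n^1)^{\al})$ disperses $\phi^1$ to zero, so $\phi = 0$ and the hypothesis of \eqref{hhh} is vacuous. In the nontrivial case, the solution starting from $\phi$ coincides with $\psi^1$, so $T^{**} = T^1_{\max}$, which is finite under the hypothesis of the theorem.

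Finally, for the quantitative bound \eqref{hhh} we invoke the contrapositive of Theorem \ref{nlpf}. With $I_n = [0, T^*-t_n)$, infinitude of $\|v_n\|_{L^\qq_t L^\rr_\rho H^\tga_\sigma(I_n\times\RR^d)}$ forces $\limsup_n \|\Gamma_n^j \psi^j\|_{L^\qq_t L^\rr_\rho H^\tga_\sigma(I_n\times\RR^d)} = +\infty$ for some $j$. The subcritical profiles ($j \ge 2$) are globally Strichartz-bounded, ruling them out. Hence $j=1$, and this forces $I_n$ to exit the forward maximal interval of $\Gamma_n^1 \psi^1$ along a subsequence:
\[
t_n^1 + (h_n^1)^{\al}\, T^{**} \le T^* - t_n.
\]
Dividing by $T^*-t_n$, using $t_n^1 = o((h_n^1)^{\al})$ from $t^1 = 0$, and taking $\limsup$ yields \eqref{hhh}. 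The main technical obstacle is the dichotomy ``subcritical mass implies global nonlinear profile with finite Strichartz norm,'' resting on Definition \ref{minimal mass} together with the stability/perturbation content of Theorem \ref{nlpf}, itself built on Proposition \ref{CP1} of Appendix A.
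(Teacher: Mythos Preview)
Your proposal is correct and follows essentially the same approach as the paper, which omits the proof and refers the reader to the radial case in \cite{chkl2}; your sketch is precisely the adaptation of that argument, driven by Theorems~\ref{main} and~\ref{nlpf} together with Definition~\ref{minimal mass}. One small remark: the weak vanishing of the rescaled remainder is not a consequence of Theorem~\ref{main}(1) (which controls a Strichartz norm), but is built into the construction in Proposition~\ref{further-decomp}, where profiles are extracted as successive weak limits and $\mu(\mathcal F^M)\to 0$; otherwise your outline is accurate.
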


When a blowup occurs, only one profile blows up by shrinking in
scale.  See \cite{chkl2} for the radial case  and \cite{chle} for
related results when $\al > 2$. As a byproduct of Theorem
\ref{cpbs}, we obtain a concentration property of blowup solution as follows.

\begin{thm}\label{mass concentration}
Let $u$ be a finite time blowup solution at $T^*$ with
$\sup_{0 < t < T^*}\|u(t)\|_{L^2_\rho H^\ga_\sigma} < \sqrt{2}\delta_0$ and let  $t_n
\nearrow T^*$. Then for $\lambda(t_n)$ satisfying $\frac {(T^* -
t_n)^{1/\alpha}}{\lambda(t_n)} \rightarrow 0$
\begin{equation}\label{masscon}
\limsup_{n \rightarrow \infty} \int_{|x| \leq \lambda(t_n)}
|u(t_n,x)|^2 dx \geq \|\phi\|_{L^2_x}^2, \end{equation}
where $\phi$ is defined as in Theorem \ref{cpbs}.
\end{thm}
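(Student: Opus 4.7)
The plan is to deduce \eqref{masscon} directly from the weak convergence and scaling bound \eqref{hhh} provided by Theorem~\ref{cpbs}, by rescaling the integral and invoking weak lower semicontinuity of the $L^2$ norm.  First I would set $v_n(y):=h_n^{d/2}u(t_n, h_n y)$, so that Theorem~\ref{cpbs} gives $v_n \rightharpoonup \phi$ weakly in $L^2_\rho H^\ga_\sigma$.  Because $(1-\Delta_\sigma)^{\ga/2}$ has eigenvalues $(1+n(n+d-2))^{\ga/2}\ge 1$ on order-$n$ spherical harmonics (cf. \eqref{spectral}), we have $\|f\|_{L^2}\le\|f\|_{L^2_\rho H^\ga_\sigma}$ for $\ga\ge 0$, giving a continuous embedding $L^2_\rho H^\ga_\sigma\hookrightarrow L^2$. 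Hence the weak convergence descends: $v_n\rightharpoonup\phi$ weakly in $L^2$ as well.

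Next, the change of variables $x=h_n y$ rewrites the left-hand side of \eqref{masscon} as
$$\int_{|x|\le\lambda(t_n)}|u(t_n,x)|^2\,dx = \int_{|y|\le\lambda(t_n)/h_n}|v_n(y)|^2\,dy.$$
The bound \eqref{hhh} yields $h_n\lesssim (T^*-t_n)^{1/\al}$ along the subsequence from Theorem~\ref{cpbs}, and combined with the hypothesis $(T^*-t_n)^{1/\al}/\lambda(t_n)\to 0$ this forces $\lambda(t_n)/h_n\to\infty$.  Consequently, for each fixed $R>0$ and all $n$ sufficiently large, the ball $B_R$ sits inside $B_{\lambda(t_n)/h_n}$, so
$$\int_{|y|\le\lambda(t_n)/h_n}|v_n|^2\,dy \ \ge\ \int_{|y|\le R}|v_n|^2\,dy.$$

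Finally, since $\chi_{B_R}v_n\rightharpoonup \chi_{B_R}\phi$ weakly in $L^2$, weak lower semicontinuity of the $L^2$ norm gives $\liminf_n \int_{|y|\le R}|v_n|^2\,dy \ge \int_{|y|\le R}|\phi|^2\,dy$.  Chaining the two displays and then letting $R\to\infty$ (monotone convergence on the right) produces \eqref{masscon}.  There is no substantial obstacle at this step: the heavy analytic work sits inside Theorem~\ref{cpbs}, and the present argument is a routine unpacking of its conclusion.  The only mild subtlety to record is the transfer of weak convergence from the angularly regular space $L^2_\rho H^\ga_\sigma$ to ordinary $L^2$, which is handled by the continuous embedding noted above.
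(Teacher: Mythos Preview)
Your argument is correct and follows the standard route: rescale, use the weak limit from Theorem~\ref{cpbs}, pass weak convergence from $L^2_\rho H^\ga_\sigma$ down to $L^2$ via the continuous embedding, use \eqref{hhh} together with the hypothesis on $\lambda(t_n)$ to get $\lambda(t_n)/h_n\to\infty$, and conclude by weak lower semicontinuity of the $L^2$ norm on balls followed by $R\to\infty$. The passage from a $\liminf$ along the subsequence furnished by Theorem~\ref{cpbs} to the $\limsup$ over the full sequence in \eqref{masscon} is harmless, as you implicitly note.

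As for comparison with the paper: the paper does not actually supply a proof of this theorem. Section~4 explicitly omits the arguments, referring to the radial case in \cite{chkl2} (see the remark at the start of Section~4 and the sentence preceding Theorem~\ref{mass concentration}). Your proof is exactly the argument one finds there, adapted to the angularly regular setting; the only new wrinkle is the transfer of weak convergence from $L^2_\rho H^\ga_\sigma$ to $L^2$, which you handle cleanly via the spectral inequality \eqref{spectral}.

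One small point worth recording: your invocation of \eqref{hhh} presumes the evolution of $\phi$ blows up at some finite $T^{**}$, since Theorem~\ref{cpbs} states \eqref{hhh} conditionally. In the underlying argument from \cite{chkl2} the boundedness of $h_n/(T^*-t_n)^{1/\alpha}$ is in fact established unconditionally (it is forced by the finite-time blowup of $u$ itself, via the nonlinear profile), and \eqref{hhh} is the sharper quantitative form when $T^{**}<\infty$. For the purpose of \eqref{masscon} only the qualitative bound $h_n\lesssim (T^*-t_n)^{1/\alpha}$ is needed, so this does not affect the validity of your proof.
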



\vspace{5mm}

\appendix
\section{Local wellposedness and global well-posedness with small data}\label{s-gwp}
In this section we show local well-posedness and global
well-posedness with small data by the standard fixed point argument.
Local well-posedness and global well-posedness with small data. Let
$q_\circ,$ $r_\circ,$ $\ga_1$, $\ga_2$, and $\tga$ be given by
\eqref{numbers}.

\begin{prop}\label{CP1} For any $u_0 \in
L^2_\rho H^\ga_\sigma$, there exists a unique solution $u$ to
\eqref{eqn} such that $u \in C_tL^2_\rho H^\ga_\sigma((-T_{min},
T_{max}) \times \mathbb{R}^d) \cap L^{q_\circ}_{t,
loc}L^{r_\circ}_\rho H^{\tga}_\sigma((-T_{min}, T_{max}) \times
\mathbb{R}^d)$ whenever $$\ga \geq \frac {d^2 - \alpha d +
\alpha}{4d}.$$ Moreover, if $\|u_0\|_{L^2_\rho H^\ga_\sigma}$ is
sufficiently small, then $T_{min} = T_{max} = \infty$ and $u \in
L^{q_\circ}_{t}L^{r_\circ}_\rho H^{\tga}_\sigma(\mathbb{R}^{1+d})$.
\end{prop}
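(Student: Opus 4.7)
The plan is to run the standard Picard iteration based on the Duhamel formula
\[
\Phi(u)(t) := U(t) u_0 - i\lambda \int_0^t U(t-s) \bigl[ (|x|^{-\alpha} * |u|^2) u \bigr](s)\, ds,
\]
set up as a contraction on a closed ball of the resolution space
\[
X_T := C_t L^2_\rho H^\gamma_\sigma \bigl((-T,T)\bigr) \,\cap\, L^{q_\circ}_t L^{r_\circ}_\rho H^{\tilde\gamma}_\sigma \bigl((-T,T) \times \mathbb{R}^d\bigr).
\]
The linear bound $\|U(\cdot) u_0\|_{L^{q_\circ}_t L^{r_\circ}_\rho H^{\tilde\gamma}_\sigma} \lesssim \|u_0\|_{L^2_\rho H^\gamma_\sigma}$ will follow by commuting $D^\gamma_\sigma$ past $U(t)$ and invoking Lemma \ref{str-radial} for the $\alpha$-admissible pair $(q_\circ, r_\circ)$ applied to $D^\gamma_\sigma u_0$; mass conservation takes care of the $C_t L^2_\rho H^\gamma_\sigma$ component. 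The inhomogeneous Duhamel piece is controlled by the dual version of the same Strichartz estimate (standard $TT^\ast$ duality suffices since $q_\circ = 3 > 2$).

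The core of the proof is the trilinear nonlinear bound
\[
\bigl\| (|x|^{-\alpha} * (u\overline{v})) w \bigr\|_{L^{\tilde q'}_t L^{\tilde r'}_\rho H^\gamma_\sigma} \lesssim \|u\|_{X_T} \|v\|_{X_T} \|w\|_{X_T}
\]
for an appropriate $\alpha$-admissible dual pair $(\tilde q, \tilde r)$. I would establish it in three steps: first, distribute the angular derivative $D^\gamma_\sigma$ across the triple product by a fractional Leibniz rule on $S^{d-1}$, using crucially that the radial kernel $|x|^{-\alpha}$ commutes with $D^\gamma_\sigma$; second, apply the Hardy--Littlewood--Sobolev inequality to the convolution $|x|^{-\alpha} * (\cdot)$ in the spatial variable; third, use Sobolev embedding on the sphere to trade the extra angular regularity $\tilde\gamma - \gamma = \gamma_2 - \gamma_1$ for the loss of angular integrability produced by H\"older on $S^{d-1}$. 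The numerology of $(q_\circ, r_\circ, \gamma_1, \gamma_2, \tilde\gamma)$ recorded in \eqref{numbers} is chosen precisely so that these three ingredients balance against the scaling $\alpha/q_\circ + d/r_\circ = d/2$.

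Once the linear and trilinear estimates are in place, choosing $T = T(\|u_0\|_{L^2_\rho H^\gamma_\sigma})$ small enough makes $\Phi$ a contraction on a ball in $X_T$, giving a unique local solution; a standard continuation argument extends it to a maximal interval $(-T_{\min}, T_{\max})$. When the initial datum is itself sufficiently small, the linear piece is already globally small in the Strichartz norm, so the contraction runs on all of $\mathbb{R}$ and delivers $T_{\min} = T_{\max} = \infty$ together with the global bound $u \in L^{q_\circ}_t L^{r_\circ}_\rho H^{\tilde\gamma}_\sigma(\mathbb{R}^{1+d})$.

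The main obstacle is the trilinear estimate: one has to distribute a fractional angular derivative across a nonlocal cubic product and still land in a dual Strichartz space with the integrability that Lemma \ref{str-radial} can map back into $X_T$. The requirement $\gamma \ge \gamma_1 = (d^2 - \alpha d + \alpha)/(4d)$ is precisely the borderline at which the interplay between the HLS gain in the radial variable, the Sobolev embedding on $S^{d-1}$, and the admissibility range of Lemma \ref{str-radial} can be simultaneously satisfied.
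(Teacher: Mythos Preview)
Your plan is essentially the paper's own proof: Duhamel plus the Strichartz estimate of Lemma~\ref{str-radial} for the linear piece, and for the nonlinearity a fractional Leibniz rule on $S^{d-1}$, a convolution bound, and angular Sobolev embedding, with the cubic term placed in $L^1_t L^2_\rho H^\gamma_\sigma$. The only point where your sketch is slightly breezy is the Hardy--Littlewood--Sobolev step: since convolution with $|x|^{-\alpha}$ mixes angles, you need HLS in the mixed space $L^r_\rho L^p_\sigma$ rather than in $L^p(\mathbb R^d)$, and the paper isolates this as a separate lemma (proved by a pointwise spherical-average inequality and interpolation between $p=1$ and $p=\infty$).
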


\begin{proof}
The proof is based on the standard fixed point argument. Let us
consider the integral equation
\begin{align}\label{int eqn}u(t) = U(t)u_0 + i\lambda\int_0^t U(t-s)((|x|^{-\alpha}*|u|^2)u)(s)ds\end{align}
on Banach space $X = X_{T,\,\mu}$ given by
\begin{align*}
X := \{ v \in L^{q_\circ}_tL^{r_\circ}_\rho H_\sigma^{\tga}([-T,T] \times \mathbb{R}^d) : \|v\|_{L^{q_\circ}_tL^{r_\circ}_\rho H_\sigma^{\tga}([-T,T] \times \mathbb{R}^d)} \leq \mu \}.
\end{align*}
For simplicity we denote $L^{q}_tL^{r}_\rho H_\sigma^{\al}([-T,T)
\times \mathbb{R}^d)$ by $L^{q}_TL^{r}_\rho H_\sigma^{\al}$. Then
$X$ is obviously a complete metric space with metric $d(u, v) = \|u
- v\|_{L^{q_\circ}_TL^{r_\circ}_\rho H_\sigma^{\tga}}$. To proceed
we consider nonlinear mapping $\mathcal{N}$ defined by
\[\mathcal N(v)(t) := U(t)u_0 + i\lambda\int_0^t U(t - s)(|x|^{-\alpha}*|v(s)|^2)(v(s))\,ds\]
and show that it is self mapping on $X$.

In fact, by Strichartz estimates (Lemma \ref{str-radial}), we have
\begin{align*}
\|\mathcal{N}(u) \|_{L_T^{q_\circ}L_\rho^{r_\circ}H_\sigma^{\tga}}
\lesssim \|U(\cdot)u_0
\|_{L_T^{q_\circ}L_\rho^{r_\circ}H_\sigma^{\tga}} +
\|D_\sigma^{\ga}((|x|^{-\al} *
|u|^2)u)\|_{L_T^{1}L_\rho^{2}L_\sigma^2}.
 \end{align*}
By Leibniz rule on the unit sphere and the H\"oder's inequality, one
obtain
\begin{align*}
\|D_\sigma^{\ga}((|x|^{-\al} *
|u|^2)u)\|_{L_T^{1}L_\rho^{2}L_\sigma^2}
 & \lesssim \||x|^{-\alpha}*|u|^2\|_{L_T^{q_{1}}L_\rho^{r_{1}}L_\sigma^{p_{1}}}\|D_\sigma^{\ga}u
 \|_{L_T^{q_{2}}L_\rho^{r_{2}}L_\sigma^{p_{2}}} \\ &\qquad\qquad+ \||x|^{-\alpha}*(D_\sigma^{\ga}|u|^2)\|_{L_T^{q_{1}}L_\rho^{r_{1}}L_\sigma^{p_{3}}}\|u\|_{L_T^{q_{2}}L_\rho^{r_{2}}L_\sigma^{p_{4}}},
\end{align*}
where
\begin{align*}\begin{aligned}\label{expo}
 &\frac 1{q_{1}} = \frac 23,\quad \frac 1{q_{2}} = \frac 13, \quad
\frac 1{r_{1}} = \frac 2{r_\circ} - \frac {d - \alpha}{d},\quad
\frac 1{r_{2}} = \frac 1 r_\circ,\quad \frac 1{p_{1}} = 2(\frac 12 - \frac {\ga_2}{d-1}),\\
&\frac 1{p_{2}} = \frac 12 - \frac {\ga_2 - \ga_1}{d-1}, \frac
1{p_{3}} = 1 - \frac {\ga_2}{d-1} - \frac {\ga_2 - \ga_1}{d-1},
\quad \frac 1{p_{4}} = \frac 12 - \frac {\ga_2}{d-1}.
\end{aligned}\end{align*}

To treat the convolution term we use the following lemma about
fractional integration in the space $L_\rho^rL_\sigma^p$.

\begin{lem}\label{frac-int} Let  $1 < r, \widetilde r < \infty$, $0 < \beta < n$, and $1
\le p \le \infty$. If $\frac1r = \frac1{\widetilde r} -
\frac{n-\beta}{n}$, then
\[
\||x|^{-\beta}*f\|_{L_\rho^rL_\sigma^p} \lesssim \|f\|_{L_\rho^{\widetilde r} L_\sigma^p}.
\]
\end{lem}

\begin{proof}[Proof of Lemma \ref{frac-int}]
We use the following pointwise estimate, which is shown in p.15 of
\cite{chonak}:

For any $\rho > 0$ and $\theta \in S^{n-1}$
\begin{align}\label{pointwise}
\int_{S^{n-1}}\left||x|^{-\beta}*f\right|(\rho\sigma)\,d\sigma \le
(|x|^{-\beta}*F)(\rho\theta),
\end{align}
where $F(\rho) = \int_{S^{n-1}}|f(\rho\sigma)|\,d\sigma$. By taking
$L_\rho^r$ on both sides of \eqref{pointwise}, from
Hardy-Littlewood-Sobolev inequality we get
$$
\||x|^{-\beta}*f\|_{L_\rho^rL_\sigma^1} \lesssim
\||x|^{-\beta}*F\|_{L_x^r} \lesssim \|F\|_{L_x^{\widetilde r}}
\lesssim \|f\|_{L_\rho^{\widetilde r}L_\sigma^1}.
$$
On the other hand, we have $ |\langle |x|^{-\beta}*f, g\rangle|  =
|\langle f, |x|^{-\beta}*g\rangle| \lesssim
\|f\|_{L_\rho^{\widetilde
r}L_\sigma^\infty}\||x|^{-\beta}*g\|_{L_\rho^{\widetilde
r'}L_\sigma^1} \lesssim \|f\|_{L_\rho^{\widetilde
r}L_\sigma^\infty}\|g\|_{L_\rho^{r'}L_\sigma^1}, $ which implies
that \[\||x|^{-\beta}*f\|_{L_\rho^rL_\sigma^\infty} \lesssim
\|f\|_{L_\rho^{\widetilde r}L_\sigma^\infty}.\] Interpolation
between these two estimates gives the desired estimates.
\end{proof}

Then by Lemma \ref{frac-int}, we get
\begin{align*}
&\||x|^{-\alpha}*|u|^2\|_{L_T^{q_{1}}L_\rho^{r_{1}}L_\sigma^{p_{1}}}\|D_\sigma^{\ga}u\|_{L_T^{q_{2}}L_\rho^{r_{2}}L_\sigma^{p_{2}}}  + \||x|^{-\alpha}*(D_\sigma^{\ga}|u|^2)\|_{L_T^{q_{1}}L_\rho^{r_{1}}L_\sigma^{p_{3}}}\|u\|_{L_T^{q_{2}}L_\rho^{r_{2}}L_\sigma^{p_{4}}}\\
&\qquad\lesssim
\||u|^2\|_{L_T^{q_{1}}L_\rho^{\frac {r_{2}} 2}L_\sigma^{p_{1}}}\|D_\sigma^{\ga}u\|_{L_T^{q_{1,2}}L_\rho^{r_{2}}L_\sigma^{p_{2}}} + \|D_\sigma^{\ga}|u|^2\|_{L_T^{q_{1}}L_\rho^{\frac {r_{2}} 2}L_\sigma^{p_{3}}}\|u\|_{L_T^{q_{2}}L_\rho^{r_{2}}L_\sigma^{p_{4}}}.
\end{align*}
Finally, the Leibniz rule and Sobolev embedding on the unit sphere
gives
\begin{align*}
&\||u|^2\|_{L_T^{q_{1}}L_\rho^{\frac {r_{2}} 2}L_\sigma^{p_{1}}}\|D_\sigma^{\ga}u\|_{L_T^{q_{2}}L_\rho^{r_{2}}L_\sigma^{p_{2}}} + \|D_\sigma^{\ga}|u|^2\|_{L_T^{q_{1}}L_\rho^{\frac {r_{2}} 2}L_\sigma^{p_{3}}}\|u\|_{L_T^{q_{2}}L_\rho^{r_{2}}L_\sigma^{p_{4}}}\\
&\quad\lesssim \|D_\sigma^{\ga_2} u \|_{L_T^{q_\circ}L_\rho^{r_\circ}L_\sigma^2}^2\|D_\sigma^{\tga} u \|_{L_T^{q_\circ}L_\rho^{r_\circ}L_\sigma^2} \lesssim \|D_\sigma^{\tga} u \|_{L_T^{q_\circ}L_\rho^{r_\circ}L_\sigma^2}^3 \lesssim \mu^3.
\end{align*}
Since $\|U(\cdot)u_0\|_{L_T^{q_\circ}L_\rho^{r_\circ}H_\sigma^{\tga}} \lesssim \|u_0\|_{L_\rho^2H_\sigma^\ga}$, for suitable $T$ and $\mu$ we have
$$
\|\mathcal N(u)\|_{L_T^{q_\circ}L_\rho^{r_\circ}H_\sigma^{\tga}} \le C(\|U(\cdot)u_0\|_{L_T^{q_\circ}L_\rho^{r_\circ}H_\sigma^{\tga}} + \mu^3) \le \mu.
$$

Similarly  one can easily show $u\to \mathcal N(u)$ is a contraction
map on $X$ for suitable $T$ and $\mu$. This implies that there
exists a unique solution $u \in
L_T^{q_\circ}L_\rho^{r_\circ}H_\sigma^{\tga}$ to \eqref{int eqn}.
Now using Lemma \ref{str-radial} again, we get
$$
\|\mathcal N(u)\|_{L_T^\infty L_\rho^2H_\sigma^{\ga}} \le \|u_0\|_{L_\rho^2H_\sigma^\ga} + C\mu^3.
$$
Thus $u \in L_T^\infty L_\rho^2H_\sigma^{\ga} \cap
L_T^{q_\circ}L_\rho^{r_\circ}H_\sigma^{\tga}$ and from the
uniqueness and the formula \eqref{int eqn} the well-posedness is
straightforward.

On the other hand, if $\|u_0\|_{L_\rho^2H_\sigma^\ga}$ and $\mu$ are
sufficiently small, then the functional $\mathcal N$ is shown to be
a contraction map on complete metric space $Y$ given by
$$
Y := \{ v \in (C_tL_\rho^2H_\sigma^\ga \cap L^{q_\circ}_tL^{r_\circ}_\rho H_\sigma^{\tga})(\mathbb{R}^{1+d}) : \|v\|_{(L_t^\infty L_\rho^2H_\sigma^\ga \cap L^{q_\circ}_tL^{r_\circ}_\rho H_\sigma^{\tga})(\mathbb{R}^{1+d})} \leq \mu \}.
$$
We omit the details.
\end{proof}

The well-posedness for a given asymptotic state is also similar and
fairly standard. We provide its proof for completeness.

\begin{prop}\label{wpasy}
Given $u_\infty \in  L_\rho^2H_\sigma^\gamma(\mathbb R^d)$, there exists a positive $T$
and a unique solution $u$ to \eqref{eqn} such that $u \in
C_tL^2_\rho H^\ga_\sigma([T,\infty) \times \mathbb{R}^d) \cap
L^{q_\circ}_tL^{r_\circ}_\rho H_\sigma^{\tga}([T,\infty) \times
\mathbb{R}^d)$ and $\|u(t) - U(t)u_\infty\|_{L^2_\rho H^\ga_\sigma}
\rightarrow 0$ as $t \rightarrow \infty.$
\end{prop}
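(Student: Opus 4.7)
The plan is to realize the wave operator via a contraction on the fixed-point equation
\begin{equation*}
\mathcal M(u)(t) := U(t)u_\infty - i\lambda\int_t^{\infty} U(t-s)\bigl[(|x|^{-\alpha}\ast|u|^2)u\bigr](s)\,ds,
\end{equation*}
set on a half line $[T,\infty)$ with $T$ to be chosen large. Any fixed point of $\mathcal M$ is a mild solution of \eqref{eqn} on $[T,\infty)$, and applying $U(-t)$ to both sides gives
\begin{equation*}
U(-t)u(t)-u_\infty=-i\lambda\int_t^{\infty} U(-s)\bigl[(|x|^{-\alpha}\ast|u|^2)u\bigr](s)\,ds,
\end{equation*}
so the asymptotic property $\|u(t)-U(t)u_\infty\|_{L^2_\rho H^\gamma_\sigma}\to 0$ as $t\to\infty$ will follow from a tail bound on the nonlinearity.

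First I would set up the contraction on the complete metric space
\begin{equation*}
Y_T:=\Bigl\{\,v\in \bigl(C_tL^2_\rho H^\gamma_\sigma\cap L^{q_\circ}_tL^{r_\circ}_\rho H^{\tga}_\sigma\bigr)([T,\infty)\times\mathbb R^d)\,:\,\|v\|_{L^\infty_tL^2_\rho H^\gamma_\sigma\cap L^{q_\circ}_tL^{r_\circ}_\rho H^{\tga}_\sigma}\le \mu\,\Bigr\},
\end{equation*}
for a $\mu\simeq\|u_\infty\|_{L^2_\rho H^\gamma_\sigma}$, with metric $d(u,v)=\|u-v\|_{L^{q_\circ}_tL^{r_\circ}_\rho H^{\tga}_\sigma([T,\infty)\times\mathbb R^d)}$. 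The trilinear estimate needed for $\mathcal M$ is exactly the one used in the proof of Proposition~\ref{CP1}: using the sphere Leibniz rule, H\"older, the convolution bound Lemma~\ref{frac-int}, Sobolev embedding on $S^{d-1}$, and Strichartz (Lemma~\ref{str-radial}), one gets
\begin{equation*}
\Bigl\|\int_t^{\infty} U(t-s)\bigl[(|x|^{-\alpha}\ast|u|^2)u\bigr](s)\,ds\Bigr\|_{L^\infty_tL^2_\rho H^\gamma_\sigma\cap L^{q_\circ}_tL^{r_\circ}_\rho H^{\tga}_\sigma([T,\infty))}\lesssim \|u\|_{L^{q_\circ}_tL^{r_\circ}_\rho H^{\tga}_\sigma([T,\infty))}^3,
\end{equation*}
together with the parallel difference estimate $d(\mathcal M(u),\mathcal M(v))\lesssim (\|u\|^2+\|v\|^2)d(u,v)$.

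The only genuinely new ingredient relative to Proposition~\ref{CP1}, and the main technical point of the proof, is the tail smallness of the free evolution: Lemma~\ref{str-radial} gives $\|U(\cdot)u_\infty\|_{L^{q_\circ}_tL^{r_\circ}_\rho H^{\tga}_\sigma(\mathbb R\times\mathbb R^d)}\lesssim\|u_\infty\|_{L^2_\rho H^\gamma_\sigma}<\infty$, so by dominated convergence
\begin{equation*}
\lim_{T\to\infty}\|U(\cdot)u_\infty\|_{L^{q_\circ}_tL^{r_\circ}_\rho H^{\tga}_\sigma([T,\infty)\times\mathbb R^d)}=0.
\end{equation*}
Choosing $\mu$ small relative to $\|u_\infty\|_{L^2_\rho H^\gamma_\sigma}$ and then $T$ large, one makes $\mathcal M$ a contractive self-map of $Y_T$, yielding the unique solution $u\in C_tL^2_\rho H^\gamma_\sigma\cap L^{q_\circ}_tL^{r_\circ}_\rho H^{\tga}_\sigma([T,\infty)\times\mathbb R^d)$. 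The scattering conclusion then drops out of the $U(-t)$-identity by applying the same trilinear estimate on $[t,\infty)$:
\begin{equation*}
\|u(t)-U(t)u_\infty\|_{L^2_\rho H^\gamma_\sigma}\lesssim \|u\|_{L^{q_\circ}_tL^{r_\circ}_\rho H^{\tga}_\sigma([t,\infty)\times\mathbb R^d)}^3\xrightarrow[\,t\to\infty\,]{}0.
\end{equation*}
I expect no substantive obstacle beyond those already handled in Proposition~\ref{CP1}; the proof is the standard wave-operator adaptation, the only delicate bookkeeping being that the mixed intersection norm automatically places the fixed point in $C_tL^2_\rho H^\gamma_\sigma$, which is routine from the Duhamel formula and the Strichartz estimate.
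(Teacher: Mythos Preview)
Your approach is essentially the paper's: a standard wave-operator fixed point on $[T,\infty)$, using the trilinear estimate from Proposition~\ref{CP1} together with the tail smallness $\|U(\cdot)u_\infty\|_{L^{q_\circ}_tL^{r_\circ}_\rho H^{\tga}_\sigma([T,\infty))}\to 0$. The only cosmetic difference is that the paper runs the contraction on the perturbation $v=u-U(\cdot)u_\infty$ rather than on $u$ itself, and then sets $u=U(\cdot)u_\infty+v$ and verifies the Duhamel formula for $u$ afterwards.

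There is, however, one slip in your bookkeeping that the paper's formulation avoids. With a \emph{single} bound $\mu$ on the intersection norm in $Y_T$, the linear piece forces $\|U(\cdot)u_\infty\|_{L^\infty_tL^2_\rho H^\gamma_\sigma([T,\infty))}=\|u_\infty\|_{L^2_\rho H^\gamma_\sigma}\le \mu$, while the contraction requires $C\mu^2<1$; for general (not small) $u_\infty$ these are incompatible, so neither ``$\mu\simeq\|u_\infty\|$'' nor ``$\mu$ small relative to $\|u_\infty\|$'' can be correct as stated. The fix is immediate: either impose two separate bounds in $Y_T$ (only the $L^{q_\circ}_tL^{r_\circ}_\rho H^{\tga}_\sigma$ component needs to be small, and it alone controls both the self-map and the Lipschitz constant), or follow the paper and contract on $v$, which is genuinely small in \emph{both} norms since $\|\mathcal N(v)\|\lesssim \|U(\cdot)u_\infty\|_{L^{q_\circ}_tL^{r_\circ}_\rho H^{\tga}_\sigma([T,\infty))}^3+\|v\|^3$. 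With that adjustment your argument goes through.
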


\begin{proof}
We define a nonlinear mapping $\mathcal{N}$ by
$$\mathcal N(v)(t) := i\lambda\int_t^\infty U(t - s)(|x|^{-\alpha}*|U(s)u_\infty + v(s)|^2)(U(s)u_\infty + v(s))ds$$
for $v$ in Banach space $Z = Z_{T, \mu}$ given by
\begin{align*}
Z := \big\{ v \in &C_tL^2_\rho H^\ga_\sigma([T,\infty) \times
\mathbb{R}^d) \cap L^{q_\circ}_tL^{r_\circ}_\rho H^{\tga}([T,\infty)
\times \mathbb{R}^d) : \\&\qquad \|v\|_{L_t^\infty L^2_\rho
H^\ga_\sigma([T,\infty) \times \mathbb{R}^d)} +
\|v\|_{L^{q_\circ}_tL^{r_\circ}_\rho H^{\tga}([T,\infty) \times
\mathbb{R}^d)} \leq \mu \big\}.
\end{align*}
Similarly to proof of Proposition \ref{CP1}, one can get
\begin{align*}
&\|\mathcal N(v)\|_{L_t^\infty L^2_\rho H^\ga_\sigma([T,\infty)\times\mathbb{R}^d)} + \|\mathcal N(v)\|_{L^{q_\circ}_tL^{r_\circ}_\rho H^{\tga}([T,\infty)\times\mathbb{R}^d)} \\
&\quad \lesssim \|U(\cdot)u_\infty\|^{3}_{L^{q_\circ}_tL^{r_\circ}_\rho H^{\tga}([T,\infty)\times\mathbb{R}^d)} + \|v\|^{3}_{L^{q_\circ}_tL^{r_\circ}_\rho H^{\tga}([T,\infty)\times\mathbb{R}^d)}.
\end{align*}
Since $\|U(\cdot)u_\infty\|_{L^{q_\circ}_tL^{r_\circ}_\rho
H^{\tga}([T,\infty)\times\mathbb{R}^d)} \lesssim
\|u_\infty\|_{L^2_\rho H^\ga_\sigma}$ by Lemma \ref{str-radial},
$\mathcal N$ becomes a self-mapping on $Z$ for sufficiently large
$T$. Similarly one can easily show that $\mathcal N$ is a
contraction mapping on $Z$. Let $v$ be the fixed point of $\mathcal
N$ in $Z$. Then by continuity we get $\|v(t)\|_{L^2_\rho
H^\ga_\sigma} \rightarrow 0$ as $t \rightarrow \infty$.

Now we write $u(t)$ as $u(t) = U(t)u_\infty + v(t).$ Then it follows
that $\|u(t) - U(t)u_\infty\|_{L^2_\rho H^\ga_\sigma} \rightarrow 0
\text{ as } t \rightarrow \infty$. It remains to show that
\begin{align}\label{sol-u} u(\tau) = U(\tau - t)u(t) -i \lambda \int^\tau_t U(\tau - s)
((|x|^{-\alpha} * |u|^2)u)(s)ds.\end{align}
In fact, since $v(\tau) = \mathcal N(v)(\tau)$, one can show that
$$v(\tau) = U(\tau-t)v(t)-i\lambda\int_t^\tau U(\tau-s)((|x|^{-\alpha} * |u|^2)u)(s)\,ds.$$
Thus
$$
u(\tau) = U(\tau)u_\infty + v(\tau) = U(\tau - t)(U(t)u_\infty +
v(t)) - i\lambda \int_t^\tau U(\tau-s)((|x|^{-\alpha} *
|u|^2)u)(s)\,ds,
$$
which yields \eqref{sol-u}.
\end{proof}

\section*{Acknowledgments} Y. Cho was supported by NRF grant 2012-0002855 (Republic of Korea), S. Lee and G. Hwang were in part by NRF grant 2009-0083521 (Republic of Korea). S. Kwon was partially supported by TJ Park science fellowship and NRF grant 2010-0024017 (Republic of Korea). \medskip

%

\end{document}